\newcommand{\nbb}{\mathbb{N}}
\newcommand{\rbb}{\mathbb{R}}
\renewcommand{\L}{\mathcal{L}}
\newcommand{\W}{\mathcal{W}}
\newcommand{\B}{\mathcal{B}}
\newcommand{\nt}{\notag}
\newcommand{\x}{\mathrm{x}}
\newcommand{\y}{\mathrm{y}}
\newcommand{\z}{\mathrm{z}}
\newcommand{\Xbf}{\mathbf{X}}
\newcommand{\R}{\mathcal{R}}
\renewcommand{\d}{\textup{d}}
\newcommand{\f}{\varphi}
\newcommand{\E}{\mathbb{E}}
\renewcommand{\P}{\mathbb{P}}
\newcommand{\M}{\mathcal{M}}
\theoremstyle{plain}
\newtheorem{theorem}{Theorem}[section]
\newtheorem{lemma}[theorem]{Lemma}
\newtheorem{assumption}[theorem]{Assumption}
\newtheorem{proposition}[theorem]{Proposition}
\theoremstyle{definition}
\newtheorem{definition}[theorem]{Definition}
\newtheorem{remark}[theorem]{Remark}
\numberwithin{equation}{section}
\title{Geometric ergodicity of a stochastic Hamiltonian system}
\author{ Hung D.~Nguyen$^1$ and Lekun Wang$^2$}
\thanks{\noindent \hspace{-0.52cm} $^1$ Department of Mathematics, University of Tennessee, Knoxville, Tennessee, USA}
\thanks{\hspace{-0.25cm} Email: hnguye53@utk.edu}
\thanks{\noindent $^2$ Master's Program in Computational and Applied Mathematics, The University of Chicago, Chicago, Illinois, USA}
\thanks{\hspace{-0.25cm} Email: lekunbillwang@uchicago.edu}
\begin{document}

\maketitle

\begin{abstract}
We study the long time statistics of a two-dimensional Hamiltonian system in the presence of Gaussian white noise. While the original dynamics is known to exhibit finite time explosion, we demonstrate that under the impact of the stochastic forcing as well as a deterministic perturbation, the solutions are exponentially attractive toward the unique invariant probability measure. This extends previously established results in which the system is shown to be noise-induced stable in the sense that the solutions are bounded in probability.
\end{abstract}

\medskip

\noindent Keywords: Exponential mixing; Invariant measures; Hamiltonian system.

\section{Introduction} \label{sec:intro}
There are many deterministic systems whose solutions only exist up to a finite time window. Interestingly, by adding a suitable stochastic forcing, it can be shown that these dynamics become non-explosive and even further stabilize as time tends to infinity \cite{athreya2012propagating,bodova2012noise,cerrai2005stabilization,
herzog2015noiseI,herzog2015noiseII,herzog2021stability,
kolba2019noise,leimbach2020noise,leimbach2017blow,
scheutzow1993stabilization}. This phenomenon is typically known as either \emph{ noise-induced stability} or \emph{noise-induced stabilization} \cite{athreya2012propagating,leimbach2020noise}. Results in this direction appeared as early as in the work of \cite{scheutzow1993stabilization,scheutzow1995integral} in which noise stabilization is dimensional sensitive for a class of SDEs. More specifically, while in dimension 1, this system exhibits finite time blow-up, in dimension two, it is relaxing toward the unique invariant probability measure exponentially fast \cite{athreya2012propagating,herzog2015noiseI,herzog2015noiseII}. Similar results concerning stationary distributions are central in the work of  \cite{gawedzki2011ergodic,herzog2021stability} for a model arising in turbulent transport. Analogous results for a reaction-diffusion equation was previously established in \cite{cerrai2005stabilization}. Another large-time behavior of interest is the existence of global random attractors which are investigated in \cite{leimbach2020noise,leimbach2017blow}. 

In the present article, we are interested in the statistically steady states of the following system in $\rbb^2$:
\begin{align}\label{eq:system}
        \d{}x_t& = \Big(h'(x^m_ty^n_t)x^{m-1}_ty^{n-1}_t -\big|h'(x^m_t y^n_t )x^{m-1}_t y^{n-1}_t \big|^q\Big)n x_t  \d t + \epsilon_x \d{}B_t^1, \nt \\ 
 \d{}y_t& = \Big(- h'(x^m_t y^n_t )x^{m-1}_t y^{n-1}_t  -\big|h'(x^m_t y^n_t )x^{m-1}_t y^{n-1}_t \big|^q\Big) my_t \d t  + \epsilon_y \d{}B_t^2.
\end{align} 
In the above, $h:\rbb\to\rbb$ represents the Hamiltonian satisfying
\begin{align*}
\inf_{x\in\rbb}|h'(x)|>0,
\end{align*}
$(B^1_t,B^2_t)$ is a two-dimensional standard Brownian motion, {$m\ge2,n\ge2$ are integers}, and $q\in (1,\infty),\,\epsilon_x,\,\epsilon_y$ are positive constants. We note that \eqref{eq:system} is regarded as a {type of perturbation} of a deterministic Hamiltonian dynamics of the form
\begin{align}
\frac{\d}{\d t}x_t&= \partial_y H(x_t,y_t), \nt \\
\frac{\d}{\d t}y_t&= -\partial_x H(x_t,y_t), \label{eq:system:Hamiltonian}
\end{align}  
where
\begin{align*}
H(x,y) = h(x^my^n).
\end{align*}
Concerning \eqref{eq:system:Hamiltonian}, notably, in the case $m=n$, the well-posedness is guaranteed whereas in the case $m\neq n$, the solution exhibits finite-time explosion \cite{kolba2019noise}. Indeed, following the discussion in \cite[Section 2]{kolba2019noise}, when $m=n$, the global solution of \eqref{eq:system:Hamiltonian} is given by
\begin{align*}
x_t & = x_0\exp\Big\{ h'(x_0^my_0^m)mx_0^{m-1}y_0^{m-1} t \Big\},\\
y_t & = y_0\exp\Big\{ -h'(x_0^my_0^m)mx_0^{m-1}y_0^{m-1} t \Big\}.
\end{align*}
On the other hand, when $m\neq n$, using the ``smart" change of variable
\begin{align*} 
    z=x^{m-1}y^{n-1},
\end{align*}
together with the fact that \eqref{eq:system:Hamiltonian} satisfies
\begin{align*}
    h(x_t^my_t^n) = h(x_0^my_0^n),
\end{align*}
observe that
\begin{align} \label{eq:z_t:Hamiltonian}
    \frac{\d}{\d t}z_t = h'(x_t^my_t^n)(m-n)x_t^{2m-2}y_t^{2n-2 } = h'(x_0^my_0^n)(m-n)z_t^2.
\end{align}
Then, we can explicitly solve for $z_t$ and ultimately derive the formula
\begin{align*}
x_t &= x_0\big(1-h'(x_0^my_0^n)(m-n)x_0^{m-1}y_0^{n-1} \cdot t\big)^{\frac{n}{n-m}},\\
y_t & = y_0\big(1-h'(x_0^my_0^n)(m-n)x_0^{m-1}y_0^{n-1} \cdot t\big)^{\frac{m}{m-n}}.
\end{align*}
In particular, when $m\neq n$, system \eqref{eq:system:Hamiltonian} does not possess global solutions for all time. To overcome the well-posedness issue, as a first ansatz, we may resort to the approach of adding white noise and consider the stochastic analogue of \eqref{eq:system:Hamiltonian} given by
\begin{align}\label{eq:system:Hamiltonian:noise}
        \d{}x_t& = h'(x^m_ty^n_t)nx^{m}_ty^{n-1}_t   \d t + \epsilon_x \d{}B_t^1, \nt \\ 
 \d{}y_t& = - h'(x^m_t y^n_t )mx^{m-1}_t y^{n}_t   \d t  + \epsilon_y \d{}B_t^2.
\end{align} 
However, as it turns out, when $m=n$, \eqref{eq:system:Hamiltonian:noise} does not stabilize for large-time $t$ \cite{kolba2019noise} whereas when $m\neq n$, the solution does not even exist globally. This stems from the fact that there is still a lack of strong dissipation in \eqref{eq:system:Hamiltonian:noise}, allowing for the solution to grow indefinitely out of control. In \cite{kolba2019noise}, the authors circumvent the instability by adding a deterministic perturbation to \eqref{eq:system:Hamiltonian:noise} as follow:
 \begin{align}\label{eq:system:q=2}
        \d{}x_t& = h'(x^m_ty^n_t)n x^{m}_ty^{n-1}_t\d t -\big|h'(x^m_t y^n_t )\big|^2 nx^{2m-1}_t y^{2n-2}_t   \d t + \epsilon_x \d{}B_t^1, \nt \\ 
 \d{}y_t& = - h'(x^m_t y^n_t )m x^{m-1}_t y^{n}_t\d t  -\big|h'(x^m_t y^n_t )\big|^2 mx^{2m-2}_t y^{2n-1}_t  \d t  + \epsilon_y \d{}B_t^2.
\end{align}
We note that \eqref{eq:system:q=2} becomes a special case of \eqref{eq:system} when $q=2$ in \eqref{eq:system}. The motivation of \eqref{eq:system:q=2} can be explained as follows: when $\epsilon_x=\epsilon_y=0$ in \eqref{eq:system:q=2}, $z_t=x_t^{m-1}y_t^{n-1}$ satisfies the deterministic equation
\begin{align} \label{eq:z_t:Hamiltonian:perturb}
    \frac{\d}{\d t}z_t = h'(x_t^my_t^n)(m-n)z_t^2 - |h'(x_t^my_t^n)|^2(m(n-1)+n(m-1))z_t^3.
\end{align}
In comparison with \eqref{eq:z_t:Hamiltonian}, the appearance of the second term on the right-hand side of \eqref{eq:z_t:Hamiltonian:perturb} formally plays the role of a potential, pushing $z_t$ back to the origin whenever its values gets too large. Thus, while \eqref{eq:system:q=2} breaks the Halmitonian structure, it creates an energy loss effect, which is otherwise not available in either \eqref{eq:system:Hamiltonian} or \eqref{eq:system:Hamiltonian:noise}. We also would like to remark that this modification of the original equation \eqref{eq:system:Hamiltonian} is purely for mathematical purpose. The issue of exploring other interesting forms of stabilization for \eqref{eq:system:Hamiltonian} is left open for future projects.

Following the framework of \cite{khasminskii2011stochastic}, it can be shown that \eqref{eq:system:q=2} is stable in the sense that the solution is bounded in probability \cite[Theorem 1]{kolba2019noise}, cf. \eqref{def:stable} below. The stability argument relies on the construction of a suitable Lyapunov function proving that the dynamics is always recurrent in finite time. As a byproduct, \eqref{eq:system:q=2} admits at least one invariant probability measure. The uniqueness of an invariant measure for \eqref{eq:system:q=2} however remained unresolved in \cite{kolba2019noise}.

Our main goal in this note is thus to address the question of unique ergodicity and ultimately the mixing rate for the general system \eqref{eq:system} (for all $q>1$). More specifically, we will show that \eqref{eq:system} is always exponentially attractive toward the unique invariant probability measure, cf. Theorem \ref{thm:ergodicity} below. Following the framework of \cite{hairer2011yet,meyn2012markov}, the proof of Theorem \ref{thm:ergodicity} relies on two important ingredients: a minorization condition proving that one may couple a solution pair once both have arrived in
the center of the phase space, and a suitable Lyapunov function showing the dynamics return to the center exponentially fast. While the minorization condition is quite standard making use of the ellipticity of the system, that is noise is present in all directions, the construction of Lyapunov functions is more delicate that requires a deeper understanding of the behavior of the solutions. To this end, we draw upon the method employed in \cite{kolba2019noise} dealing with the same large-time issue for the particular equation \eqref{eq:system:q=2}. Yet, while the arguments in \cite{kolba2019noise} are sufficient to establish stochastical boundedness, they do not produce the quantitative control needed to obtain unique ergodicity. In this work, we tackle the issue by refining the proofs in \cite{kolba2019noise} tailored to  the general system \eqref{eq:system} and deriving stronger dissipative estimates, which are very convenient for the purpose of establishing a convergence rate. The technique of Lyapunov employed here is also motivated by previous work in \cite{athreya2012propagating,
herzog2015noiseI,herzog2015noiseII, herzog2017ergodicity,lu2019geometric}. Namely, we perform a heuristic asymptotic scaling to determine the leading order terms in the ``bad" regions where the corresponding deterministic Hamiltonian system \eqref{eq:system:Hamiltonian} exhibits finite time explosions, namely, when $|x|$ is small whereas $|y|$ is large, and vice versa (see Figure \ref{fig:simulations}). It is worthwhile to point out that noise in both directions will be facilitated to drive the stochastic flow away from these regions. On the other hand, using the same scaling shows that when $|x|$ and $|y|$ are both large, the $q$-terms in \eqref{eq:system} are the dominating quantities, forcing the dynamics returning exponentially fast. We emphasize that the general structures of our Lyapunov functions are not new as they were discussed in \cite{kolba2019noise}, see also Remark \ref{rem:kolba2}. Nevertheless, the heuristic scaling is particularly helpful as to explain why the presence of noise and the $q$-term is crucial in \eqref{eq:system}. This argument will be presented in Section \ref{sec:lyapunov:intuition} whereas the main proofs of the Lyapunov functions are supplied in Section \ref{sec:lyapunov:R1R2R3} and Section \ref{sec:lyapunov:gluing}.

The rest of the paper is organized as follows. In Section~\ref{sec:result}, we introduce the relevant notations and the main assumptions. We also state the main results of the paper, including Proposition~\ref{prop:well-posed} giving the well-posedness and Theorem \ref{thm:ergodicity}, establishing unique ergodicity as well as an exponential mixing rate. In Section~\ref{sec:lyapunov}, we discuss the Lyapunov construction including a heuristic argument to build up intuition about the dynamics \eqref{eq:system}. We also provide the rigorous proofs of Lyapunov functions in this section. In Section \ref{sec:proof-of-main-theorem}, we prove the main ergodicity result by making use of a minorization condition as well as the estimates collected in Section~\ref{sec:lyapunov}.

\section{Assumptions and main results} \label{sec:result}

Throughout, we let $(\Omega, \mathcal{F}, (\mathcal{F}_t)_{t\geq 0},  \P)$ be a filtered probability space satisfying the usual conditions \cite{karatzas2012brownian} and $B_t^1,\,B_t^2$ are i.i.d. standard Brownian motions adapted to the filtration $(\mathcal{F}_t)_{t\ge 0}$.

Concerning the nonlinearity, we will make the following assumption throughout the paper.
\begin{assumption} \label{cond:h}
The function $h\in C^1(\rbb)$ satisfies \begin{equation}\label{cond:h:h'>a}
|h'(t)|>a,\quad t\in\rbb,
\end{equation}
for some positive constant $a$. In other words, $h'$ is bounded away from zero and either strictly positive or strictly negative.
\end{assumption}
With regard to the constants $q,m,n$ in \eqref{eq:system}, we assume that they are bounded from below by 1.
\begin{assumption} \label{cond:m_and_n} The constants $q$, $m$ and $n$ satisfy $m,n\in\nbb$ and $q, m, n \,\in (1,\infty)$.
\end{assumption}

\begin{remark} \label{rem:qmn}
We note that the condition $m,n>1$ is nominal ensuring that the Hamiltonian $H(x,y)=h(x^my^n)$ is differentiable, so as to avoid singularity in the drift terms of \eqref{eq:system}. On the other hand, the condition $q>1$ is to create a dissipative effect dominating the Hamiltonian,  especially when $|x|$ and $|y|$ are both large. See the proof of Lemma \ref{lem:v1} below. In turn, this allows for statistical equilibrium to be
reached with an exponential convergence rate.  
\end{remark}
Under the above assumptions, it can be shown that system \eqref{eq:system} is always well-posed. More precisely, we have the following result.
\begin{proposition} \label{prop:well-posed}
For every initial condition $\x_0=(x_0,y_0)\in\rbb^2$, system \eqref{eq:system} admits a unique strong solution $\x_t=(x_t,y_t)\in\rbb^2$.
\end{proposition}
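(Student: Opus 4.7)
The plan is to establish local well-posedness via standard SDE theory and then upgrade to a global solution by invoking a Lyapunov function --- the construction of which is deferred to the material developed in Section~\ref{sec:lyapunov} of the paper.

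First, I would observe that under Assumptions~\ref{cond:h} and~\ref{cond:m_and_n}, each drift coefficient of~\eqref{eq:system} is a locally Lipschitz function of $(x,y)$ on $\mathbb{R}^2$ (since $h \in C^1$ and $m,n,q > 1$, so that $x^{m-1}$, $y^{n-1}$, and $t \mapsto |t|^q$ are $C^1$ on $\mathbb{R}$), while the diffusion is additive and constant. Classical It\^o theory (e.g.,~\cite{karatzas2012brownian}) then yields a pathwise unique strong solution $(x_t,y_t)$ on the stochastic interval $[0,\tau_\infty)$, where
\[
\tau_R := \inf\bigl\{t \ge 0 : x_t^2 + y_t^2 \ge R^2 \bigr\}, \qquad \tau_\infty := \lim_{R\to\infty} \tau_R
\]
is the explosion time.

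Next, I would upgrade to global existence by proving $\tau_\infty = \infty$ almost surely using Khasminskii's classical non-explosion test. The test asks for $V \in C^2(\mathbb{R}^2; [0,\infty))$ with $V(x,y) \to \infty$ as $x^2 + y^2 \to \infty$ and a drift estimate $\mathcal{L} V \le c_1 V + c_2$ for some constants $c_1, c_2 > 0$, where $\mathcal{L}$ is the infinitesimal generator of~\eqref{eq:system}. Given such $V$, applying It\^o's formula to $V(x_{t \wedge \tau_R}, y_{t \wedge \tau_R})$, taking expectations, and invoking Gr\"onwall's inequality produces a bound
\[
\mathbb{E}\, V(x_{t \wedge \tau_R}, y_{t \wedge \tau_R}) \le \bigl(V(x_0,y_0) + c_2 t\bigr)\, e^{c_1 t}
\]
that is uniform in $R$; combining this with $V \to \infty$ at infinity and Markov's inequality then forces $\mathbb{P}(\tau_R \le t) \to 0$ as $R \to \infty$ for each $t$, i.e., $\tau_\infty = \infty$ a.s.

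The hard part is of course the construction of $V$, which I would not attempt within this proposition itself: as the introduction emphasizes, the drift in~\eqref{eq:system} behaves qualitatively very differently in the regimes where $|x|$ is small with $|y|$ large (and vice versa) versus where both are large, so $V$ must be engineered by patching together local estimates in each regime --- the $q$-dissipation dominates when both $|x|$ and $|y|$ are large, while the nondegenerate noise in each direction tames the mixed regimes. Fortunately, the Lyapunov function built later for the ergodicity result of Theorem~\ref{thm:ergodicity} will satisfy the strictly stronger bound $\mathcal{L}V \le -c V + C$ outside a compact set, which \emph{a fortiori} implies the hypothesis above; thus Proposition~\ref{prop:well-posed} will follow directly once the constructions of Sections~\ref{sec:lyapunov:R1R2R3} and~\ref{sec:lyapunov:gluing} are in hand.
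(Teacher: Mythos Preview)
Your proposal is correct and matches the paper's approach exactly: the paper does not give a self-contained argument for Proposition~\ref{prop:well-posed} but simply remarks that it ``is a consequence of the existence of suitable Lyapunov functions, cf.\ Section~\ref{sec:lyapunov},'' following~\cite{kolba2019noise}. You have spelled out the standard machinery (local Lipschitz drift, Khasminskii's non-explosion test) that the paper leaves implicit, and correctly observed that the stronger bound $\mathcal{L}V \le -cV + C$ from Lemma~\ref{lem:Lyapunov} a fortiori yields the linear growth estimate needed for non-explosion.
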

Following the approach in \cite{kolba2019noise}, the proof of Proposition \ref{prop:well-posed} is a consequence of the existence of suitable Lyapunov functions, cf. Section \ref{sec:lyapunov}. 
As a consequence of the well-posedness, we can thus introduce the Markov transition probabilities of the solution $\x_t$ by
\begin{align*}
P_t(\x_0,A):=\P_{\x_0}(\x_t\in A),
\end{align*}
which are well-defined for $t\ge 0$, initial condition $\x_0\in\rbb^2$ and Borel sets $A\subset \rbb^2$. Letting $\B_b(\rbb^2)$ denote the set of bounded Borel measurable functions $f:\rbb^2 \rightarrow \rbb$, the associated Markov semigroup $P_t:\B_b(\rbb^2)\to\B_b(\rbb^2)$ is defined and denoted by
\begin{align*}
P_t f(\x_0)=\E_{\x_0}[f(\x_t) ], \,\, f\in \B_b(\rbb^2).
\end{align*}
Recall that a probability measure $\mu$ on Borel subsets of $\rbb^2$ is called {\bf invariant} for the semigroup $P_t$ if for every $f\in \B_b(\rbb^2)$
\begin{align*}
\int_{\rbb^2} f(x,y) P_t^*\mu(\d x,\d y)=\int_{\rbb^2} f(x,y)\mu(\d x,\d y),
\end{align*}
where $P_t^*\mu$ denotes the push-forward measure of $\mu$ by $P_t$, i.e.,
\begin{align*}
\int_{\rbb^2}f(x,y)P_t^*\mu(\d x,\d y) = \int_{\rbb^2} P_t f(x,y)\mu (\d x,\d y).
\end{align*}
Next, we let $\mathcal{L}$ be the generator  associated with \eqref{eq:system} and given by
\begin{equation*}
    (\mathcal{L}\f )(x,y)=\lim_{t\to 0}\frac{\mathbb{E}_{(x,y)}[\f (X_t)]-\f(x,y)}{t}
\end{equation*}
In particular, one defines $\mathcal{L}$ for any $v\in C^2(\rbb^2;\rbb)$ by the following expression
\begin{equation}\label{form:L}
     \mathcal{L}v= (w-w^q)nx\frac{\partial}{\partial x}v+\frac{\epsilon_x^2}{2}\frac{\partial^2}{\partial x^2}v+(-w-w^q)my \frac{\partial}{\partial y}v+\frac{\epsilon_y^2}{2}\frac{\partial^2}{\partial y^2}v,
\end{equation} 
where 
\begin{align}
  w(x,y)&:=h'(x^my^n)x^{m-1}y^{n-1}.\label{eq:w}
\end{align}
In \cite{kolba2019noise}, it was shown that there exists a suitable function $V\in C^2(\rbb^2;\rbb)$ such that
\begin{align} \label{ineq:LV->-infty}
\L V \to -\infty,\quad\text{as }\, |(x,y)|\to \infty,
\end{align}
As a consequence, the process $(x_t,v_t)$ is globally stable. That is for all initial condition $(x_0,y_0)$ and for all $\delta\in(0,1)$, there exists a positive constant $M=M(\delta,x_0,y_0)$ sufficiently large such that
\begin{align} \label{def:stable}
\P(|(x_t,y_t)|\le M)>1-\delta,\quad t\ge 0.
\end{align}
Furthermore, it can be shown that the following sequence of average measures 
\begin{align*}
\mu_T(\cdot)=\frac{1}{T}\int_0^T P_t((0,0),\cdot)\d t,
\end{align*}
is tight. By virtue of the Krylov-Bogoliubov procedure, up to a subsequence, $\mu_T$ converges weakly to a limiting measure $\mu_\infty$, which is invariant for \eqref{eq:system}.

With regard to the uniqueness of $\mu_\infty$ as well as the convergent rate of $P_t$ toward $\mu_\infty$, we will work with suitable Wasserstein distances. Following the framework of \cite{hairer2011yet,meyn2012markov}, for a measurable function $V:\rbb^2\to(0,\infty)$, we introduce the weighted supremum norm  defined as
\begin{align*}
\|\f\|_V:=\sup_{(x,y)\in\rbb^2}\frac{|f(x,y)|}{1+V(x,y)}.
\end{align*} 
We denote by $\M_V$ the collection of probability measures $\mu$ on Borel subsets of $\rbb^2$ such that
\begin{align*}
\int_{\rbb^2}V(x,y)\mu(\d x,\d y)<\infty.
\end{align*}
Let $\W_V$ be the corresponding Wasserstein distance in $\M_V$ associated with $\|\cdot\|_V$, given by
\begin{align*}
\W_V(\mu_1,\mu_2)=\sup_{\|\f\|_V\le 1}\Big|\int_{\rbb^2} \f(x,y)\mu_1(\d x,\d y)-\int_{\rbb^2} \f(x,y)\mu_2(\d x,\d y)\Big|.
\end{align*}
We refer the reader to the monograph \cite{villani2021topics} for a detailed account of Wasserstein distances and optimal transport problems. With this setup, we can now state the main result of the paper:

\begin{theorem} \label{thm:ergodicity}
1. The Markov semigroup $P_t$ admits a unique invariant probability measure $\pi$.

2. There exists a function $V\in C^2(\rbb^2;[1,\infty))$ such that for all $\mu\in \M_V$, the following estimate holds 
\begin{align} \label{ineq:exponential-rate}
\W_V\big( P_t^*\mu,\pi \big) \le Ce^{-ct}\W_V(\mu,\pi),\quad t\ge 0,
\end{align}
for some positive constants $C$ and $c$ independent of $\mu$ and $t$. 
\end{theorem}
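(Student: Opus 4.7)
The plan is to apply the weak Harris-type framework of Hairer--Mattingly \cite{hairer2011yet,meyn2012markov}, which delivers exponential contraction in a weighted Wasserstein distance $\W_V$ from two ingredients: (i) a $C^2$ Lyapunov function $V:\rbb^2\to[1,\infty)$ satisfying a drift condition of the form $\L V\le -cV+K$, and (ii) a minorization condition saying that on sublevel sets $\{V\le R\}$, the transition kernel $P_{t_0}(\x,\cdot)$ dominates a fixed nontrivial probability measure. Given these, one promotes the Lyapunov bound into a contraction in $\W_V$ by the standard argument of constructing a coupling that forces both marginals into a common sublevel set in geometric time and then firing the minorization.

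For the Lyapunov side I would build $V$ piecewise, guided by the heuristic asymptotic scaling discussed in Section~\ref{sec:lyapunov:intuition}. Following the decomposition of the phase space into three regions (both $|x|,|y|$ large; $|x|$ small and $|y|$ large; and the symmetric case), I would look for candidate functions of the form $|x|^{\alpha}|y|^{\beta}$ or $|x|^{-\alpha}|y|^{\beta}$ with carefully tuned exponents depending on $q,m,n$. In the doubly-large regime, the $w^q$ terms in \eqref{form:L} dominate and produce a strong negative drift proportional to a power of $V$, which is enough to beat $-cV$. In the axis-adjacent regimes, the deterministic drift is not dissipative, but the It\^o correction coming from $\tfrac12\ve_x^2\partial_x^2 V$ (respectively $\tfrac12\ve_y^2\partial_y^2 V$) supplies the missing negative contribution, provided the exponent of $|x|^{-\alpha}$ (resp.\ $|y|^{-\alpha}$) is chosen so that this noise term dominates $h'$-driven growth. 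I would then glue the three local Lyapunov functions together via a smooth cutoff partition, invoking Lemmas from Sections~\ref{sec:lyapunov:R1R2R3} and \ref{sec:lyapunov:gluing} to verify $\L V\le -cV+K$ globally, not merely $\L V\to-\infty$ as in \cite{kolba2019noise}; this sharper inequality is what upgrades boundedness in probability to an exponential rate.

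For the minorization, the diffusion matrix $\mathrm{diag}(\ve_x^2,\ve_y^2)$ is uniformly elliptic and the drift is $C^\infty$ away from the axes, so H\"ormander's condition holds trivially and standard parabolic regularity yields a smooth, strictly positive transition density $p_{t_0}(\x,\y)$ for any fixed $t_0>0$. On any compact sublevel set $\{V\le R\}$, the density is bounded below by a positive continuous function, whence $P_{t_0}(\x,\cdot)\ge \alpha\,\nu(\cdot)$ uniformly in $\x\in\{V\le R\}$ for a suitable probability measure $\nu$. Combining this minorization with the drift condition via the contraction argument of \cite{hairer2011yet} gives a single step contraction in the distance $d(\mu_1,\mu_2)=\inf_{\Gamma}\int(1+\beta V(\x)+\beta V(\y))\mathbf{1}_{\x\neq\y}\,\d\Gamma$, which is equivalent to $\W_V$ up to multiplicative constants, yielding \eqref{ineq:exponential-rate}.

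The main obstacle will be the Lyapunov construction rather than the coupling, since the heuristic exponents must be chosen simultaneously compatibly with: the homogeneity degree $m$--$n$ mismatch of the Hamiltonian, the exponent $q$ of the dissipative correction, and the order of the noise-induced second derivative near the axes, all while producing a globally $C^2$ function bounded below by $1$. Verifying that the interpolation region contributes only lower-order terms in $\L V$ is the step most likely to require delicate computation; this is essentially why the analysis of \cite{kolba2019noise} must be refined rather than quoted directly.
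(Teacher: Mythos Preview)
Your overall strategy---verify a Lyapunov drift condition as in Definition~\ref{def:lyapunov} and the minorization of Definition~\ref{def:minorization}, then invoke \cite[Theorem~1.2]{hairer2011yet}---is exactly the paper's proof. The minorization sketch is also essentially that of Lemma~\ref{lem:minorization}, though the paper pairs ellipticity with the Stroock--Varadhan support theorem and a control argument rather than asserting strict positivity of the transition density outright.

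The gap is in your Lyapunov ansatz for the axis-adjacent regions. You propose monomials of the form $|x|^{-\alpha}|y|^{\beta}$ and claim the It\^o term $\tfrac12\ve_x^2\partial_{xx}V$ supplies the missing negative drift. This fails on two counts. First, $\R_3$ in \eqref{eq:R3} contains the entire segment $\{x=0,\ |y|\ge c_3\}$, so any factor $|x|^{-\alpha}$ with $\alpha>0$ is singular there and cannot be part of a $C^2$ function on $\rbb^2$. Second, even off the axis, $\partial_{xx}|x|^{-\alpha}=\alpha(\alpha+1)|x|^{-\alpha-2}>0$, so the It\^o correction is \emph{positive}, not dissipative---the opposite of what you need. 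The paper's actual local Lyapunov function in $\R_3$ is the polynomial $v_3(x,y)=y^2(1-k_3x^2)$ of \eqref{eq:v3}: it is smooth across the axis, satisfies $\partial_{xx}v_3=-2k_3y^2<0$, and the resulting term $-\ve_x^2k_3y^2$ in \eqref{eq:lv3} is precisely what produces the bound \eqref{ub:lv3}. Symmetrically $v_2=x^2(1-k_2y^2)$ in $\R_2$, and in $\R_1$ the paper uses the additive $v_1=x^2+y^2$ rather than a product monomial. The gluing you anticipate is then carried out in Lemmas~\ref{lem:v12}--\ref{lem:v13} with these polynomial building blocks; the cross terms from differentiating the cutoff are absorbed by taking $c_1$ large, which is indeed the refinement over \cite{kolba2019noise} needed to upgrade \eqref{ineq:LV->-infty} to \eqref{cond:Lv<-v}.
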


The proof of Theorem \ref{thm:ergodicity} will follow by establishing the existence of appropriate Lyapunov functions, cf. Definition \ref{def:lyapunov}, as well as a minorization condition, cf. Definition \ref{def:minorization}. The former shows that the process returns exponentially fast to a bounded set around the origin whereas the latter is needed to couple a solution pair once both have arrived at the center. While the minorization is quite standard and follows the classical Stroock-Varadhan support theorem, the Lyapunov construction is more delicate requiring a deeper understanding of the dynamics. In particular, we will employ the functions found in \cite{kolba2019noise} and prove that they satisfy dissipative estimates stronger than \eqref{ineq:LV->-infty}. This will be explained in details in Section \ref{sec:lyapunov}. The minorization as well as the proof of Theorem \ref{thm:ergodicity} will be supplied in Section \ref{sec:proof-of-main-theorem}.

\section{Lyapunov function} \label{sec:lyapunov}

Throughout the rest of the paper, $c$ and $C$ denote generic positive constants that may change from line to line. The main parameters that they depend on will appear between parenthesis, e.g., $c(T,q)$ is a function of $T$ and $q$. 

In this section, we draw upon the Lyapunov approach in \cite{kolba2019noise} and explicitly construct a Lyapunov function that is used to establish geometric ergodicity in Theorem \ref{thm:ergodicity}. For the reader's convenience, we recall the definition of a Lyapunov function below.

\begin{definition}[Lyapunov Function] \label{def:lyapunov}A function $V(x,y)\in C^2(\R;\rbb)$ is a Lyapunov function on $\R\subset \mathbb{R}^2$ if the following hold:

1. $V(x,y)\ge 0, (x,y)\in \mathcal{R}$ and $V(x,y)\to \infty$  whenever $| (x,y)|\to \infty$ in $\mathcal{R}$; and

2. there exist positive constants $a_1$ and $a_2$ such that
\begin{equation}\label{cond:Lv<-v}
\mathcal{L}V(x,y)<-a_1V(x,y)+a_2,\quad (x,y)\in \mathcal{R},
\end{equation}
where $\L$ is the generator given by \eqref{form:L}. 

In case, $\R=\rbb^2$, $V$ is called a global Lyapunov function. Otherwise, $V$ is called a local Lyapunov function.
\end{definition}

Our main result in this section is the following lemma giving the existence of a globally Lyapunov function.
\begin{lemma} \label{lem:Lyapunov} There exists a global Lyapunov function for system \eqref{eq:system}.
\end{lemma}

Following the approach in \cite{athreya2012propagating,birrell2012transition,
cooke2017geometric,foldes2021sensitivity,
herzog2015noiseI,
herzog2015noiseII,
herzog2017ergodicity,
kolba2019noise}, the proof of Lemma \ref{lem:Lyapunov} consists of two main steps: we first construct local Lyapunov functions on different regions of the phase space. This will be heuristically explained using a scaling analysis in Section \ref{sec:lyapunov:intuition} whereas the rigorous proofs will be presented in Section \ref{sec:lyapunov:R1R2R3}. Then, we will patch them altogether allowing us to obtain a single global Lyapunov function. The gluing argument will be carried out in Section \ref{sec:lyapunov:gluing}.

\subsection{Heuristics and decomposition of the phase space} \label{sec:lyapunov:intuition}

Before diving into the precise details of the Lyapunov construction, we build some heuristic about the system \eqref{eq:system}. This will help gain a better understanding of the Lyapunov construction employed in \cite{kolba2019noise}.

Firstly, to see intuitively the dynamics in different regions of $\rbb^2$, we provide some numerics in Figure \ref{fig:simulations}.

\begin{figure}[htb]
\renewcommand{\arraystretch}{0.1}
\setlength{\tabcolsep}{-4pt}
\begin{tabular}{ccc}
  \includegraphics[width=0.3\linewidth]{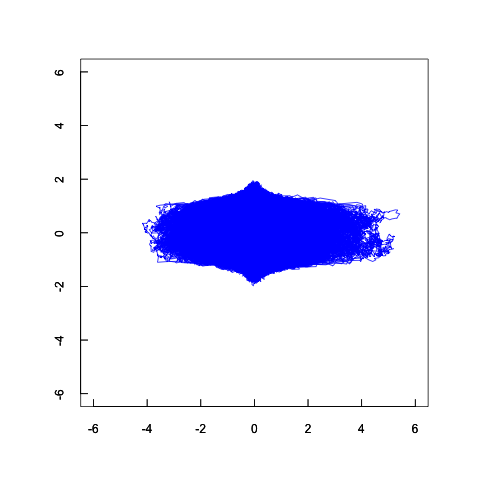}\vspace*{-1mm} &   \includegraphics[width=0.3\linewidth]{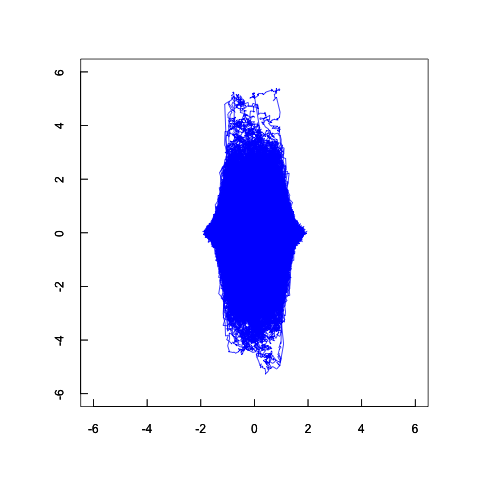}\vspace*{-1mm} & 
  \includegraphics[width=0.3\linewidth]{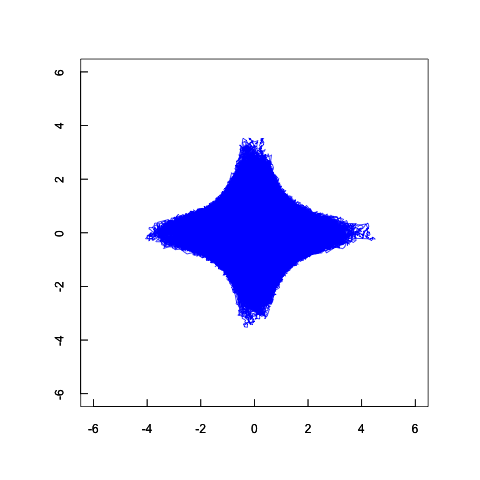}\vspace*{-1mm} \\
$m=2,n=9$, $h'>0$ & $m=9,n=2$, $h'>0$  & $m=n=5$, $h'>0$ \\[1pt]
  \includegraphics[width=0.3\linewidth]{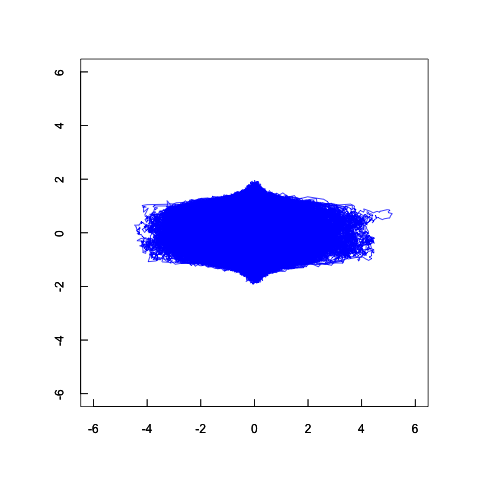}\vspace*{-1mm} &   \includegraphics[width=0.3\linewidth]{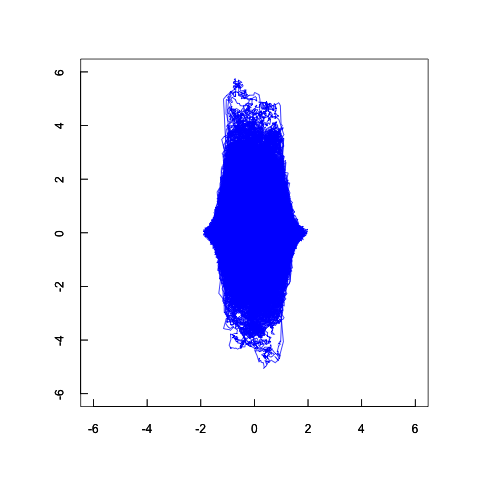}\vspace*{-1mm} & 
  \includegraphics[width=0.3\linewidth]{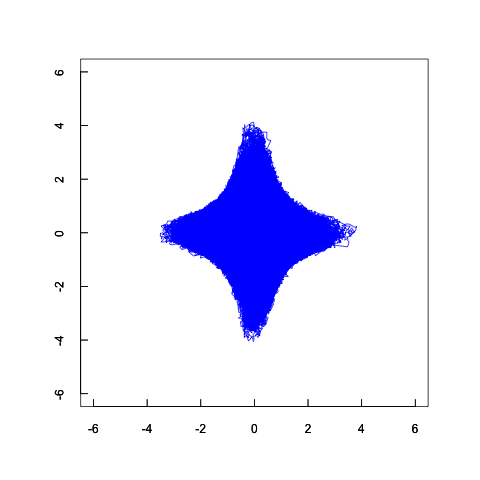}\vspace*{-1mm} \\
$m=2,n=9$, $h'<0$ & $m=9,n=2$, $h'<0$  & $m=n=5$, $h'<0$ \\[1pt]
\end{tabular}
\caption{Simulation of the system with $\epsilon_x=\epsilon_y=10,q=2,m=n,h(t)=\pm t$.}
\label{fig:simulations}
\end{figure}
The numerics indicate that there are three distinct regions where the dynamics behave differently, namely, when $|x|$ and $|y|$ are both large, when $|x|\to\infty$ while $|y|\to 0$, and when $|x|\to0$ while $|y|\to \infty$. 

To further determine the boundaries as well as local Lyapunov functions on each subregions, we will assume for the sake of simplicity that
\begin{align*}
h(t)=t,
\end{align*}
So that
\begin{align*}
h'(t)=1.
\end{align*}
Concerning the first region when $|x|$ and $|y|$ are both large, we introduce the following scaling transformation
\begin{align*}
S_1:(x,y)\to (\lambda x,\lambda y),
\end{align*}
for a parameter $\lambda >0$. Considering $\L$ as in \eqref{form:L}, under the transformation $S_1$, we obtain (here $h'=1$)
\begin{align*}
\L \circ S_1(x,y)
& = \lambda^{m+n-2}nx^{m}y^{n-1}\partial_x- \lambda^{q(m+n-2)}nx^{q(m-1)+1}y^{q(n-1)}\partial_x \\
&\qquad-\lambda^{m+n-2}mx^{m-1}y^{n}\partial_y- \lambda^{q(m+n-2)}mx^{q(m-1)}y^{q(n-1)+1}\partial_y\\
&\qquad+\frac{1}{2}\epsilon_x^2\lambda^{-2}\partial_{xx} +\frac{1}{2}\epsilon_y^2\lambda^{-2}\partial_{yy}.
\end{align*}
We observe that as $\lambda\to\infty$ (while $x,y$ are being fixed), the first order terms in the above right-hand side are the dominating quantities, thanks to the exponent $\lambda^{m+n-2}$. Together with the numerics in Figure \ref{fig:simulations}, this suggests that the region for large $|x|$ and $|y|$ is given by
\begin{align*}
\R_1=\{(x,y)\in\rbb^2: |x|^{m-1}|y|^{n-1} >c\},
\end{align*}
for some suitably chosen constant $c$. In this region, a natural candidate for the Lyapunov function is given by the norm of $(x,y)$, i.e., $$V_1(x,y)=x^2+y^2.$$ 
Later in Lemma \ref{lem:v1}, we will show that such $V_1$ indeed satisfies the conditions of Definition \ref{def:lyapunov}.

With regard to the second subregion where $|x|\to\infty$ while $|y|\to 0$, namely,
\begin{align*}
\R_2=\{(x,y)\in\rbb^2: |x|^{m-1}|y|^{n-1} <c, |x|>C\},
\end{align*}
let $S_2$ be the transformation
\begin{align*}
S_2:(x,y)\to (\lambda^{\frac{1}{m-1}} x,\lambda^{-\frac{1}{n-1}} y).
\end{align*}
Under this transformation, we have
\begin{align*}
\L \circ S_2(x,y)
& = nx^{m}y^{n-1}\partial_x- nx^{q(m-1)+1}y^{q(n-1)}\partial_x -mx^{m-1}y^{n}\partial_y- mx^{q(m-1)}y^{q(n-1)+1}\partial_y\\
&\qquad+\frac{1}{2}\epsilon_x^2\lambda^{-\frac{2}{m-1}}\partial_{xx} +\frac{1}{2}\epsilon_y^2\lambda^{\frac{2}{n-1}}\partial_{yy}.
\end{align*}
Observe that in this situation, as $\lambda\to \infty$, the above right-hand side is dominated by the second order term $\partial_{yy}$. This suggests that a Lyapunov function $V_2(x,y)$ in $\R_2$ satisfies
\begin{align*}
\begin{cases}\partial_{yy}V_2(x,y)\propto -x^2,\\
\,\,\,\,\,V_2|_{\R_1\cap \R_2}\approx x^2.\end{cases}
\end{align*}
We note that the last equation above stems from the fact that $$V_1(x,y)\approx x^2,\quad (x,y)\in \R_1\cap\R_2.$$
A candidate of the above system has the form
\begin{align*}
V_2(x,y)=cx^2(1- \tilde{c} y^2),
\end{align*}
for some suitably chosen constants $c$ and $\tilde{c}$. Later in Lemma \ref{lem:v2}, we will provide the explicit choice of $V_2$ and prove that $V_2$ indeed satisfies the condition of Definition \ref{def:lyapunov} in $\R_2$.

Turning to the last subregion  where $|x|\to0$ while $|y|\to \infty$, namely,
\begin{align*}
\R_3=\{(x,y)\in\rbb^2: |x|^{m-1}|y|^{n-1} <c, |y|>C\}.
\end{align*}
Similarly to region $\R_2$, we consider the transformation
\begin{align*}
S_3:(x,y)\to (\lambda^{-\frac{1}{m-1}} x,\lambda^{\frac{1}{n-1}} y).
\end{align*}
A routine computation gives
\begin{align*}
\L \circ S_3(x,y)
& = nx^{m}y^{n-1}\partial_x- nx^{q(m-1)+1}y^{q(n-1)}\partial_x 
 -mx^{m-1}y^{n}\partial_y- mx^{q(m-1)}y^{q(n-1)+1}\partial_y\\
&\qquad+\frac{1}{2}\epsilon_x^2\lambda^{\frac{2}{m-1}}\partial_{xx} +\frac{1}{2}\epsilon_y^2\lambda^{-\frac{2}{n-1}}\partial_{yy}.
\end{align*}
In turn, the dominating balance of force in this region is contained in the second order term $\partial_{xx}$. This implies that the Lyapunov function should satisfy
\begin{align*}
\begin{cases}\partial_{xx}V_3(x,y)\propto -y^2,\\
\,\,\,\,\,V_3|_{\R_1\cap \R_3}\approx y^2,\end{cases}
\end{align*}
where the last condition above follows from the fact that
$$V_1(x,y)\approx y^2,\quad (x,y)\in \R_1\cap\R_3.$$
As an analogue of $V_2$, the candidate $V_3$ in $\R_3$ is given by
\begin{align*}
V_3(x,y)=cy^2(1- \tilde{c} x^2),
\end{align*}
for some suitably chosen constants $c$ and $\tilde{c}$. The explicit choice of these parameters will be provided in Lemma \ref{lem:v3} where we verify the condition of Definition \ref{def:lyapunov}.

\begin{remark} \label{rem:kolba2} As mentioned in the introduction, we note that the expressions of $V_i$, $i=1,2,3,$ are essentially the same as those discussed in \cite{kolba2019noise}. However, we will have to choose the parameters more carefully so as to achieve the dissipative bound \eqref{cond:Lv<-v}, which was not available in \cite{kolba2019noise}. 
\end{remark}

Having obtained the Lyapunov function on each subregions, in Section \ref{sec:lyapunov:gluing}, we will ``patch" the $V_i$, $i=1,2,3$ on overlapping regions to create a global Lyapunov function. We finish this section by the following definition of $\R_1,\R_2$ and $\R_3$.
\begin{definition} \label{def:R_123} The regions $\R_1,\R_2$ and $\R_3$ are given by
\begin{align}
     \mathcal{R}_1&:=\{(x,y)\in \mathbb{R}^2: |x|^{m-1}|y|^{n-1}\ge c_1\}\label{eq:R1}\\
      \mathcal{R}_2&:=\{(x,y)\in \mathbb{R}^2: |x|^{m-1}|y|^{n-1}\le 2c_1, |x|\ge c_2\}\label{eq:R2}\end{align}
      and
\begin{align}
\mathcal{R}_3&:=\{(x,y)\in \mathbb{R}^2: |x|^{m-1}|y|^{n-1}\le 2c_1, |y|\ge c_3\}\label{eq:R3},
\end{align}
for some positive constants $c_1,c_2$ and $c_3$ to be chosen later.
\end{definition}

\begin{figure}[htb]
  \includegraphics[width=0.6\linewidth,
  trim={0.3cm 0.3cm 0.3cm 0.3cm},clip]{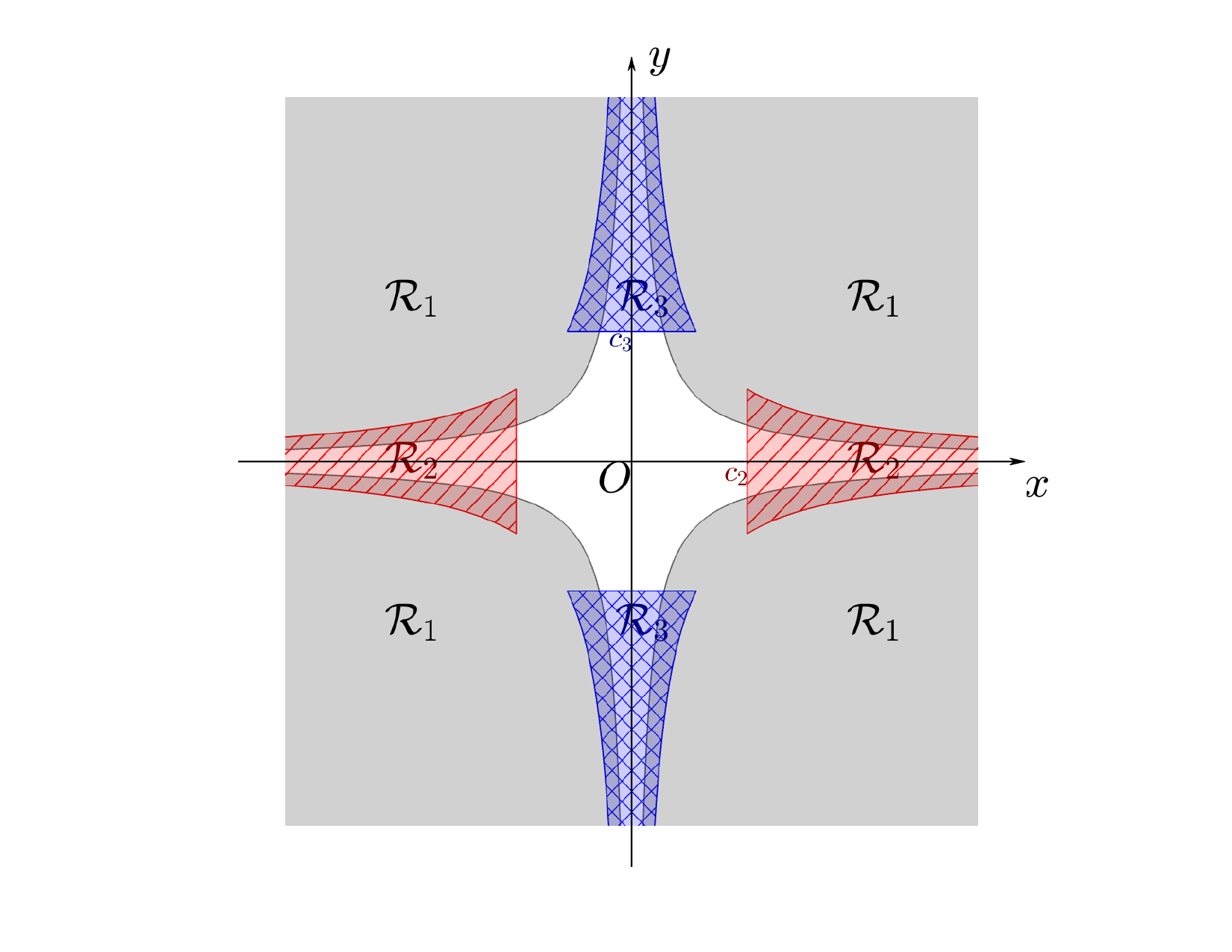}
  \vspace*{-3mm}
  \caption{Decomposition of the plane into $\mathcal{R}_1,\mathcal{R}_2,\mathcal{R}_3$.}
  \label{fig:decomposition}
\end{figure}

\begin{remark}
1. We note that in \cite{kolba2019noise}, the authors choose $c_2=c_3=1$ and $c_1>\frac{2}{a}$ where $a$ is the constant as in Assumption \ref{cond:h:h'>a}. Although these choices are good enough to establish \eqref{ineq:LV->-infty}, they are not sufficient to produce the correct dissipative bound as in \eqref{cond:Lv<-v}. In our work, we are able to circumvent this issue by taking $c_1,c_2$ and $c_3$ sufficiently large.

2. We also would like to mention that we carry out the above heuristic using $h(t)=t$ in order to look for a simple decomposition of $\rbb^2$. In turn, the resulting PDEs are relatively not difficult to solve, while still allowing us to achieve a global control. Although the presented calculation may give the impression that the regions $\mathcal{R}_i$, $i=1,2,3$ from Definition \ref{def:R_123} and the choice of $V_i$, $i=1,2,3$ are only valid for $h(t)=t$, they can actually be leveraged to construct Lyapunov functions for any $h$ satisfying Assumption \ref{cond:h} and parameters $m,n,q$ satisfying Assumption \ref{cond:m_and_n}. This will be rigorously justified in Sections \ref{sec:lyapunov:R1R2R3} and \ref{sec:lyapunov:gluing} where we supply the Lyapunov proofs in detail. Having said that, we note that $\mathcal{R}_i$ and $V_i$, $i=1,2,3$, need not be the unique choice for \eqref{eq:system}.
\end{remark}

\subsection{Local Lyapunov functions in $\mathcal{R}_1,\mathcal{R}_2$ and $\mathcal{R}_3$} \label{sec:lyapunov:R1R2R3}

For notational convenience, we denote
\begin{equation}\label{eq:u}
    u(x,y):=|w(x,y)|=|h'(x^my^n)||x|^{m-1}|y|^{n-1},
\end{equation}
where $w$ is as in \eqref{eq:w} and $h$ is as in Assumption \ref{cond:h}. In Lemma \ref{lem:v1}, stated and proven next, we provide the Lyapunov bound in region $\R_1$. The proof of Lemma \ref{lem:v1} is a slight reworking of that of \cite[Lemma 2]{kolba2019noise}.
\begin{lemma}\label{lem:v1}
         Let $V_1$ be defined as
         \begin{equation}\label{eq:v1}
             V_1(x,y)=x^2+y^2.
         \end{equation} 
         Then, for all \begin{equation}\label{eq:c1}
        c_1>\frac{4^{\frac{1}{q-1}}}{a},
         \end{equation}
where $a$ is as in \eqref{cond:h:h'>a}, the following holds: \begin{equation}\label{ub:lv1}
             \mathcal{L}V_1\le -\frac{1}{2}V_1-u^qV_1+\epsilon_x^2+\epsilon_y^2,\quad (x,y)\in\R_1,
         \end{equation}
         where $u$ and $\R_1$ are given by \eqref{eq:u} and \eqref{eq:R1}, respectively. In particular, $V_1$ is a  Lyapunov function on $\mathcal{R}_1$.
\end{lemma}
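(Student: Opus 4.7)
The plan is to verify \eqref{ub:lv1} by a direct computation of $\mathcal{L}v_1$ followed by a pointwise comparison of polynomials in $x,y,u$ on the region $\mathcal{R}_1$. Since $\partial_x v_1=2x$, $\partial_y v_1=2y$, and $\partial_{xx}v_1=\partial_{yy}v_1=2$, plugging into \eqref{form:L} (reading $w^q$ there as $|w|^q=u^q$ in accordance with \eqref{eq:system}) yields
\[
\mathcal{L}v_1 \;=\; 2w(nx^2 - my^2) \;-\; 2u^q(nx^2+my^2) \;+\; \epsilon_x^2+\epsilon_y^2 .
\]
The cross term $2w(nx^2-my^2)$ has indefinite sign, so I would bound it crudely using $|w|=u$ and $|nx^2-my^2|\le nx^2+my^2$ to obtain
\[
\mathcal{L}v_1 \;\le\; -2(nx^2+my^2)\bigl(u^q-u\bigr) \;+\; \epsilon_x^2+\epsilon_y^2 .
\]

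The desired estimate \eqref{ub:lv1} then rearranges into the pointwise inequality
\[
x^2\!\left[(2n-1)u^q - 2nu - \tfrac{1}{2}\right] \;+\; y^2\!\left[(2m-1)u^q - 2mu - \tfrac{1}{2}\right] \;\ge\; 0
\]
on $\mathcal{R}_1$, so it suffices to prove $(2n-1)u^q-2nu\ge\tfrac{1}{2}$ and the same with $m$ in place of $n$. Factoring, each reduces to $u\bigl[(2n-1)u^{q-1}-2n\bigr]\ge\tfrac{1}{2}$. Here is where the quantitative threshold on $c_1$ comes into play: by \eqref{cond:h:h'>a} together with \eqref{eq:R1}, one has $u=|h'(x^m y^n)|\,|x|^{m-1}|y|^{n-1}\ge a c_1 > 4^{1/(q-1)} > 1$, hence $u^{q-1}>4$. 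Combined with $n>1$ this yields
\[
(2n-1)u^{q-1} - 2n \;>\; 4(2n-1) - 2n \;=\; 6n-4 \;>\; 2 ,
\]
and multiplying by $u>1$ gives $(2n-1)u^q-2nu>2>\tfrac{1}{2}$; the argument is symmetric in $m,n$. This establishes \eqref{ub:lv1}. The remaining Lyapunov conditions from Definition~\ref{def:lyapunov} are immediate: $v_1\ge 0$, $v_1\to\infty$ as $|(x,y)|\to\infty$, and \eqref{ub:lv1} implies $\mathcal{L}v_1\le -\tfrac{1}{2}v_1+(\epsilon_x^2+\epsilon_y^2)$ on $\mathcal{R}_1$, which is the form \eqref{cond:Lv<-v}.

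The principal obstacle is the indefinite-sign first-order drift $2w(nx^2-my^2)$: the naive absolute-value bound $|w|=u$ costs a full $+2u(nx^2+my^2)$ that must then be absorbed entirely by the dissipative $q$-contribution $-2u^q(nx^2+my^2)$, while still leaving room for the required $-\tfrac{1}{2}v_1-u^q v_1$ margin. This is precisely what the condition $c_1>4^{1/(q-1)}/a$ buys: it forces $u^{q-1}>4$, which gives a fourfold multiplicative advantage of the $q$-term over the drift and exactly dominates the coefficient $\tfrac{2n}{2n-1}\le 2$ that arises after regrouping by $x^2$. Choosing a weaker threshold (e.g.\ the one $c_1>2/a$ used in \cite{kolba2019noise}) produces only $\mathcal{L}v_1\to-\infty$ as in \eqref{ineq:LV->-infty} but fails to deliver the linear dissipation $-\tfrac{1}{2}v_1$ needed for the exponential convergence in Theorem~\ref{thm:ergodicity}.
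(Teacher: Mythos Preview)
Your proof is correct and follows essentially the same route as the paper's: both compute $\mathcal{L}v_1$, use $|w|\le u$ to reach $\mathcal{L}v_1\le 2(nx^2+my^2)(u-u^q)+\epsilon_x^2+\epsilon_y^2$, and then exploit $u>4^{1/(q-1)}$ on $\mathcal{R}_1$ to extract the $-\tfrac12 v_1 - u^q v_1$ margin. The only organizational difference is that the paper first strips the $m,n$ coefficients via $(nx^2+my^2)(u-u^q)\le (x^2+y^2)(u-u^q)$ (using $m,n>1$ and $u-u^q<0$) and then checks the single scalar inequality $u-\tfrac12 u^q<-\tfrac14$, whereas you retain the $m,n$ and verify the two inequalities $(2n-1)u^q-2nu>\tfrac12$ and $(2m-1)u^q-2mu>\tfrac12$ directly; both arguments are equivalent in spirit and effort.
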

\begin{proof}
Recalling $\mathcal{L}$, $w$ and  $u=u(x,y)=|w(x,y)|$ as in \eqref{form:L}, \eqref{eq:w} and \eqref{eq:u}, respectively, a routine computation gives
\begin{align}\label{eq:lv1}
    (\mathcal{L}V_1)(x,y)&=2nx^2(w-|w|^q)+\epsilon_x^2+2my^2(-w-|w|^q)+\epsilon_y^2 \notag \\
    &=  2nx^2(w-u^q)+2my^2(-w-u^q)+\epsilon_x^2+\epsilon_y^2.
\end{align} 
Letting $c_1>4^{\frac{1}{q-1}}/a$ where $a$ is as in \eqref{cond:h:h'>a}, we observe that for $(x,y)\in\R_1$,
\begin{align} \label{ub:u>4^(1/(q-1))_in_R1}
u=\left|h'\left(x^my^n\right)\right||x|^{m-1}|y|^{n-1}>a\cdot c_1>a\cdot \frac{4^{\frac{1}{q-1}}}{a}=4^{\frac{1}{q-1}},\quad (x,y)\in\R_1.
\end{align}  
In the second estimate above, we employed the fact that $|h'|>a$ by virtue of \eqref{cond:h:h'>a}. As a consequence, the following holds in $\R_1$
\begin{equation}\label{eq:bound on u}
   \quad u-\frac{1}{2}u^q<-\frac{1}{4}.
\end{equation}
Turning back to \eqref{eq:lv1}, since $-u\le w\le u$ and $u-u^q<0$, we have
\begin{align*}
        \mathcal{L}V_1&\le 2(nx^2+my^2)(u-u^q)+\epsilon_x^2+\epsilon_y^2\\
            &\le 2(x^2+y^2)\left(u-u^q\right)+\epsilon_x^2+\epsilon_y^2,
            \end{align*}
            where the last estimate follows from the condition that $m>1$ and $n>1$, cf. Assumption \ref{cond:m_and_n}. Together with the estimate of \eqref{eq:bound on u}, we further deduce
            \begin{align*}
                   \mathcal{L}V_1     &\le 2\left(u-\frac{1}{2}u^q\right)(x^2+y^2)-u^q(x^2+y^2)+\epsilon_x^2+\epsilon_y^2  \\
            &\le- \frac{1}{2}V_1-u^qV_1+\epsilon_x^2+\epsilon_y^2. 
\end{align*}
This produces \eqref{ub:lv1}, establishing that
$V_1$ is a local Lyapunov function in $\mathcal{R}_1$, by Definition \ref{def:lyapunov}. 
\end{proof}

Next, we consider region $\R_2$ given by \eqref{eq:R2}. In view of the heuristic argument in Section \ref{sec:lyapunov:intuition}, we have the following result providing a Lyapunov function in $\R_2$. 

\begin{lemma}\label{lem:v2}
    Fixing
    \begin{equation}\label{eq:k2}
        k_2=\big(4^{\frac{q}{q-1}}+1\big)\frac{n}{\epsilon_y^2},
    \end{equation}
   let $V_2$ be defined as
    \begin{equation}\label{eq:v2}
        V_2(x,y)=x^2(1-k_2y^2).
    \end{equation}
Suppose the positive constant $c_2$ as in \eqref{eq:R2} satisfies the following choice
    \begin{equation}\label{ub:c2}
        c_2= \Big(2c_1\cdot\Big(\frac{n}{bk_2(m+n)}\Big)^\frac{1-m}{2}\Big)^\frac{1}{n-1},
    \end{equation}
    where $b>4$, $c_1$ is as in \eqref{eq:R1}. Then, it holds that
    \begin{equation}\label{ub:lv2}
         (\mathcal{L}V_2)(x,y)\le-V_2-\frac{1}{2}x^2u^q+\epsilon_x^2,\quad (x,y)\in \mathcal{R}_2,
    \end{equation}
    where  $\mathcal{R}_2$ is given by \eqref{eq:R2}.
    In particular, $V_2$ is a Lyapunov function on $\mathcal{R}_2$ as in Definition~\ref{def:lyapunov}.
\end{lemma}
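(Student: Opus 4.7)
The approach I will take is direct and parallels Lemma~\ref{lem:v1}: compute $\mathcal{L}v_2$ explicitly using \eqref{form:L}, then exploit the geometry of $\mathcal{R}_2$ (where $|y|$ is forced to be small by $|x|\ge c_2$ together with $|x|^{m-1}|y|^{n-1}\le 2c_1$) and the tailored choices of $k_2$ and $c_2$ to extract dissipation. Computing $\partial_x v_2 = 2x(1-k_2 y^2)$, $\partial_{xx} v_2 = 2(1-k_2 y^2)$, $\partial_y v_2 = -2k_2 x^2 y$, $\partial_{yy} v_2 = -2k_2 x^2$ and substituting into \eqref{form:L}, then regrouping by $w$ and $u^q$, produces the identity
\begin{equation*}
    \mathcal{L}v_2 = 2x^2 w\bigl[n + (m-n)k_2 y^2\bigr] + 2x^2 u^q\bigl[-n + (m+n)k_2 y^2\bigr] + \epsilon_x^2(1-k_2 y^2) - \epsilon_y^2 k_2 x^2,
\end{equation*}
in which the last term $-\epsilon_y^2 k_2 x^2$, arising from the $\partial_{yy}$ diffusion, is the dominant dissipative force predicted by Section~\ref{sec:lyapunov:intuition}.

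Next, I will show that the formula \eqref{ub:c2} for $c_2$ forces a uniform bound $(m+n)k_2 y^2 \le n/b$ throughout $\mathcal{R}_2$, where $b>4$ is the parameter appearing in that definition. Starting from $|y|^{n-1}\le 2c_1/|x|^{m-1}\le 2c_1/c_2^{m-1}$ and substituting \eqref{ub:c2}, the exponents are arranged precisely so that this translates into the desired bound on $k_2 y^2$. The two consequences are that the $u^q$-coefficient satisfies $-n+(m+n)k_2 y^2 \le -n(1-1/b)<0$ uniformly in $\mathcal{R}_2$, and that the $w$-coefficient obeys $|n+(m-n)k_2 y^2|\le n(1+1/b)$.

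Using $|w|\le u$ and applying Young's inequality with conjugate exponents $q$ and $q/(q-1)$ to the cross term yields, for any $\delta>0$,
\begin{equation*}
    2x^2 |w|\cdot n(1+1/b) \le \delta\, x^2 u^q + C(\delta,n,b,q)\, x^2.
\end{equation*}
Choosing $\delta$ so that the remaining coefficient in front of $x^2 u^q$ equals exactly $-\tfrac{1}{2}$ — feasible for $b>4$ and $n>1$ since $2n(1-1/b)>\tfrac{1}{2}$ — reduces the estimate to
\begin{equation*}
    \mathcal{L}v_2 \le -\tfrac{1}{2}x^2 u^q + (C-\epsilon_y^2 k_2)x^2 + \epsilon_x^2(1-k_2 y^2).
\end{equation*}
The particular choice $k_2=(4^{q/(q-1)}+1)n/\epsilon_y^2$ is designed precisely so that $\epsilon_y^2 k_2=(4^{q/(q-1)}+1)n$ is large enough to absorb both the Young's residual $C$ and the target $-v_2=-x^2(1-k_2 y^2)$; the conjugate-exponent appearance of $q/(q-1)$ in $k_2$ is a strong hint that the authors tuned this balance tightly. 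Finally, bounding $\epsilon_x^2(1-k_2 y^2)\le \epsilon_x^2$ completes the estimate. The main obstacle I anticipate is the constant-chasing at this last step — explicitly verifying $C+1-k_2 y^2\le \epsilon_y^2 k_2$ so that $-v_2$ and $-\tfrac{1}{2}x^2 u^q$ emerge simultaneously from the single dissipative ``budget'' $\epsilon_y^2 k_2 x^2$ — and a secondary technical nuisance will be checking the uniform bound on $k_2 y^2$ in Paragraph~2 in both sub-cases $m\ge n$ and $m<n$, where the exponent arithmetic in \eqref{ub:c2} becomes asymmetric.
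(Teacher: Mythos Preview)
Your approach is essentially the same as the paper's: compute $\mathcal{L}v_2$, use the geometry of $\mathcal{R}_2$ to bound $(m+n)k_2 y^2\le n/b$, and then trade the linear $u$-term against the available $u^q$ via a one-variable optimization. The only substantive difference is tactical. You reserve exactly $-\tfrac12 x^2u^q$ and apply Young's inequality to the remainder, leaving a residual $C_\delta$ whose comparison with $\epsilon_y^2 k_2$ you flag as the ``main obstacle''. The paper instead reserves $-\tfrac{n}{2}x^2u^q$ and writes the leftover drift as $2x^2\cdot n\,g(u)$ with $g(u)=(\tfrac1b-\tfrac34)u^q+(\tfrac1b+1)u$; the point of this split is that $g$ is independent of $n$, so one maximizes once over $u\ge 0$ to get $\max g\le 2\cdot 4^{1/(q-1)}$, hence the residual is exactly $4n\cdot 4^{1/(q-1)}=n\cdot 4^{q/(q-1)}$. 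This is precisely the first summand in $\epsilon_y^2 k_2=n(4^{q/(q-1)}+1)$, so the cancellation is exact and leaves $-nx^2\le -n v_2$, after which $n>1$ gives \eqref{ub:lv2}. In short, the ``constant-chasing'' you anticipate disappears if you reserve $n/2$ rather than $1/2$ and factor $n$ out before optimizing; your Young's-inequality step is equivalent to that maximization but with a less convenient split.

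Your secondary worry about separate cases $m\ge n$ versus $m<n$ is unnecessary: once $(m+n)k_2y^2\le n/b$ is in hand, the crude bound $n+(m-n)k_2y^2\le n+(m+n)k_2y^2$ (valid for either sign of $m-n$) already suffices, and this is exactly what the paper uses.
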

\begin{proof}
 To see that $V_2$ satisfies the first condition of Definition \ref{def:lyapunov}, we recall from \eqref{eq:R1} and \eqref{eq:R2} that for every $(x,y)\in \mathcal{R}_2$, $|x|^{m-1}|y|^{n-1}\le 2c_1$ and $|x|\ge c_2$. Denoting
\begin{equation}\label{eq:C2}
   C_2:=\left(\frac{2c_1}{c_2^{m-1}}\right)^{\frac{2}{n-1}},
\end{equation}  
observe that
\begin{align*}
y^2\le C_2,\quad (x,y)\in\mathcal{R}_2.
\end{align*}
In particular, given the choice of $c_2$ defined in \eqref{ub:c2}, we have
\begin{equation}
\label{lem3.2:key obs 3}
   y^2\le C_2= \frac{n}{bk_2(m+n)},
\end{equation}
whence (for $b>4$)
\begin{align*}
1\ge 1-k_2y^2\ge 1-\frac{n}{b(m+n)}\ge \frac{3}{4}.
\end{align*}
It follows that in $\R_2$, $V_2>3x^2/4$ implying $V_2\to \infty$ when $|x|\to\infty$ in $\R_2$.

Turning to the second condition of Definition \ref{def:lyapunov}, recall $\mathcal{L}$ and $w$ as in \eqref{form:L} and \eqref{eq:w}, respectively. Applying $\L$ to $V_2$ gives
\begin{equation}\label{eq:lv2}
    (\mathcal{L}V_2)(x,y)=2nx^2\left(1-k_2y^2\right)\left(w-w^q\right)+2mk_2x^2y^2\left(w+w^q\right)+\epsilon_x^2\left(1-k_2y^2\right)-\epsilon_y^2k_2x^2.
\end{equation}
Letting $u\ge 0$ be defined in \eqref{eq:u}, i.e., $u=|w|$ for all $(x,y)\in \mathbb{R}^2$, it follows from \eqref{eq:lv2} that
\begin{align} \label{ineq:Kolba-remark}
(\mathcal{L}V_2)(x,y)
      &\le 2nx^2\left(1-k_2y^2\right)\left(u-u^q\right)+2mk_2x^2y^2\left(u+u^q\right)+\epsilon_x^2-\epsilon_y^2k_2x^2 .
\end{align}
whence
\begin{align}
 (\mathcal{L}V_2)(x,y)
      &\le 2x^2\left[\left(k_2(m+n)y^2-n\right)u^q+\left(k_2(m+n)y^2+n\right)u\right]-\epsilon_y^2k_2x^2+\epsilon_x^2  \nt \\
    &=2x^2A_2-\frac{n}{2}x^2u^q-\epsilon_y^2k_2x^2+\epsilon_x^2,\quad (x,y)\in \mathcal{R}_2, \label{ub:lv2.1}
\end{align}
where
\begin{equation*} 
    A_2:=\left(k_2(m+n)y^2-\frac{3}{4}n\right)u^q+\left(k_2(m+n)y^2+n\right)u.
\end{equation*}
Since $y^2\le C_2$ in $\R_2$, observe that
\begin{equation}
    A_2\le \left(k_2(m+n)C_2-\frac{3}{4}n\right)u^q+\left(k_2(m+n)C_2+n\right)u.
\end{equation}
From \eqref{ub:c2} and \eqref{eq:C2}, since $ k_2(m+n)C_2=\frac{n}{b}$,
we recast the above inequality as
\begin{align}\label{ineq:A_2<g}
A_2\le n\cdot g(u),
\end{align}
where
\begin{align} \label{form:g(u)}
g(u)= \Big(\frac{1}{b}-\frac{3}{4}\Big)u^q+\Big(\frac{1}{b}+1\Big)u.
\end{align}
Furthermore, for all $b>4$, observe that $g(u)$ is concave down on $u\in[0,\infty)$ since $q>1$. As a consequence, a straight-forward calculation gives
\begin{align*}
\underset{u\ge 0}{\text{argmax }} g(u) = \bigg[ \frac{4(\frac{1}{b}+1)}{q(3-\frac{4}{b})} \bigg]^{\frac{1}{q-1}},
\end{align*}
and
\begin{align*}
\max_{u\ge 0}g(u)=\left[\Big(\frac{1}{b}+1\Big) \bigg[ \frac{4(\frac{1}{b}+1)}{q(3-\frac{4}{b})} \bigg]^{\frac{1}{q-1}} -\Big(\frac{3}{4}-\frac{1}{b}\Big)\bigg[ \frac{4(\frac{1}{b}+1)}{q(3-\frac{4}{b})} \bigg]^{\frac{q}{q-1}}  \right].
\end{align*}
In particular, choosing $b>4$, we further deduce
\begin{align} \label{ineq:max_g(u)}
\max_{u\ge 0}g(u)=\Big(\frac{1}{b}+1\Big) \bigg[ \frac{4(\frac{1}{b}+1)}{q(3-\frac{4}{b})} \bigg]^{\frac{1}{q-1}}\le  2\cdot 4^{\frac{1}{q-1}}.
\end{align}
This together with \eqref{ineq:A_2<g} implies the bound
\begin{align} \label{lem3.2:key obs 1}
A_2\le  n\cdot 2\cdot 4^{\frac{1}{q-1}}.
\end{align}
Turning back to \eqref{ub:lv2.1}, we estimate $\L V_2$ making use of \eqref{lem3.2:key obs 1} and the choice of $k_2$ as in \eqref{eq:k2} as follows:
\begin{align*}
\L V_2\le 2A_2x^2 -\frac{n}{2}x^2u^2-\epsilon_y^2k_2x^2+\epsilon_x^2 
   &\le 2\cdot n\cdot 2\cdot 4^{\frac{1}{q-1}}\cdot x^2-\frac{n}{2}x^2u^2-\epsilon_y^2\Big(4^{\frac{q}{q-1}}+1\Big) \frac{n}{\epsilon_y^2}x^2+\epsilon_x^2 \nt \\
   &=-nx^2-\frac{n}{2}x^2u^2+\epsilon_x^2 \nt \\
   &\le -nx^2(1-k_2y^2)-\frac{n}{2}x^2u^2+\epsilon_x^2. 
\end{align*}
Employing the fact that $n>1$, this produces the dissipative bound \eqref{ub:lv2}. Therefore, $V_2$ is a Lyapunov function on $\mathcal{R}_2$, as claimed.

\end{proof}

\begin{remark} \label{rem:Kolba} We note that the proof of Lemma \ref{lem:v2} is an improvement of that of \cite[Lemma 3]{kolba2019noise} giving a Lyapunov function for region $\R_2$. In the proof of \cite[Lemma 3]{kolba2019noise}, when $q=2$, the term $x^2y^2\left(u+u^q\right)=x^2y^2\left(u+u^2\right)$ on the right-hand side of \eqref{ineq:Kolba-remark} is considered to be negligible in $\R_2$. This may be possible if we assume $m=n$ (recalling $u$ defined in \eqref{eq:u}). In general, since we do not impose a growth rate on $h'$, for arbitrarily $m> n$, it is not immediately clear that we can omit this term. We therefore take it into account as presented in the proof of Lemma \ref{lem:v2}.
\end{remark}

Next, in Lemma \ref{lem:v3}, we establish a Lyapunov bound in region $\R_3$. The proof of Lemma \ref{lem:v3} is similar to that of Lemma \ref{lem:v2}.

\begin{lemma}\label{lem:v3}
     Fixing \begin{equation}\label{eq:k3}k_3=\Big(4^{\frac{q}{q-1}}+1\Big) \frac{m}{\epsilon_x^2},
    \end{equation} let $V_3$ be defined as
    \begin{equation}\label{eq:v3}
        V_3(x,y)=y^2(1-k_3x^2).
    \end{equation}
 Suppose the positive constant $c_3$ as in \eqref{eq:R3} satisfies the following choice   
    \begin{equation}\label{ub:c3}
        c_3=\Big(2c_1\cdot\Big(\frac{m}{bk_3(m+n)}\Big)^\frac{1-m}{2}\Big)^\frac{1}{n-1},
    \end{equation} 
    where $b>4$ and $c_1>0$ is as in \eqref{eq:R1}. Then, the following holds
    \begin{equation}\label{ub:lv3}
         (\mathcal{L}V_3)(x,y)<-V_3-\frac{1}{2}y^2u^2+\epsilon_y^2,\quad (x,y)\in \mathcal{R}_3,
    \end{equation}
  where  $\mathcal{R}_3$ is given by \eqref{eq:R3}.    
     In particular, $V_3$ is a Lyapunov function on $\mathcal{R}_3$ as in Definition \ref{def:lyapunov}.
\end{lemma}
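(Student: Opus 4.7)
The plan is to mirror the proof of Lemma~\ref{lem:v2}, exchanging the roles of $x$ and $y$ (and correspondingly the pairs $(m,n)$, $(\epsilon_x,\epsilon_y)$, $(k_2,k_3)$, $(c_2,c_3)$). Since $\R_3$ is the regime where $|y|$ is forced to be large while $|x|$ remains small through the constraint $|x|^{m-1}|y|^{n-1} \le 2c_1$, the function $v_3 = y^2(1 - k_3 x^2)$ is the natural analogue of $v_2$. The main obstacle, as in Lemma~\ref{lem:v2}, is verifying that the specific choice of $c_3$ together with $k_3$ collapses the coefficients to just the right threshold (namely $m/b$ inside the auxiliary function and $-m$ after the final cancellation); the remaining steps are routine differentiation or direct transcriptions of the earlier algebra with the roles swapped.

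First I would verify the growth condition of Definition~\ref{def:lyapunov}. From $|x|^{m-1} \le 2c_1/|y|^{n-1} \le 2c_1/c_3^{n-1}$ on $\R_3$, the explicit choice of $c_3$ in \eqref{ub:c3} is arranged precisely so that $x^2 \le m/(bk_3(m+n))$. Since $b > 4$, this forces $k_3 x^2 < 1/4$, hence $1 - k_3 x^2 \ge 3/4$ throughout $\R_3$, and therefore $v_3 \ge (3/4)y^2 \to \infty$ as $|y| \to \infty$ within $\R_3$.

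Next I would compute $\L v_3$ directly from \eqref{form:L}. Using $|w| = u$ to pass from $w$ to $\pm u$ in the first-order coefficients and collecting terms by powers of $u$, I expect an upper bound of the form
\begin{align*}
\L v_3 \le 2y^2\bigl[(k_3(m+n) x^2 - m) u^q + (k_3(m+n) x^2 + m) u\bigr] - \epsilon_x^2 k_3 y^2 + \epsilon_y^2,
\end{align*}
which is the exact counterpart of \eqref{ub:lv2.1}. Splitting off $-(m/2) y^2 u^q$ as in the earlier lemma isolates an auxiliary quantity $A_3 := (k_3(m+n) x^2 - 3m/4) u^q + (k_3(m+n) x^2 + m) u$, and the bound $k_3(m+n) x^2 \le m/b$ yields $A_3 \le m \cdot g(u)$ where $g$ is exactly the concave function from \eqref{form:g(u)}. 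Consequently the maximization already performed in \eqref{ineq:max_g(u)} applies verbatim to give $A_3 \le 2m \cdot 4^{1/(q-1)}$.

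Finally I would substitute back and invoke the definition of $k_3$ in \eqref{eq:k3}. Since $\epsilon_x^2 k_3 = (4^{q/(q-1)} + 1) m$ and $4 \cdot 4^{1/(q-1)} = 4^{q/(q-1)}$, the leading $y^2$ contribution $2 \cdot 2m \cdot 4^{1/(q-1)} y^2$ cancels against $-\epsilon_x^2 k_3 y^2$ precisely down to $-m y^2$, leaving $\L v_3 \le -m y^2 - (m/2) y^2 u^q + \epsilon_y^2$. Bounding $-m y^2 \le -m v_3$ (since $1 - k_3 x^2 \le 1$) and using $m > 1$ then yields the claimed dissipative estimate \eqref{ub:lv3}, completing the verification that $v_3$ is a local Lyapunov function on $\R_3$.
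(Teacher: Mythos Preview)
Your proposal is correct and follows essentially the same route as the paper's own proof: the same growth check via $x^2 \le m/(bk_3(m+n))$ giving $1-k_3x^2\ge 3/4$, the same upper bound on $\L v_3$ leading to the auxiliary quantity $A_3$, the same reduction $A_3\le m\cdot g(u)$ with $g$ from \eqref{form:g(u)}, and the same cancellation via the choice of $k_3$ to land on $-my^2-\tfrac{m}{2}y^2u^q+\epsilon_y^2$. The only cosmetic difference is that the paper also records the intermediate fully expanded line before packaging it as $2A_3y^2-\tfrac{m}{2}y^2u^q-\epsilon_x^2k_3y^2+\epsilon_y^2$, which you skip.
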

\begin{proof} Firstly, we proceed to verify that the condition 1 of Definition \ref{def:lyapunov} for $V_3$. Recall $\R_3$ from \eqref{eq:R3} that for all $(x,y)\in \mathcal{R}_3$, $|x|^{m-1}|y|^{n-1}\le 2c_1$ and $|y|\ge c_3$ where $c_1$ is the boundary threshold in \eqref{eq:R1}. Denoting
\begin{equation}\label{eq:C3}
C_3:=\left(\frac{2c_1}{c_3^{n-1}}\right)^{\frac{2}{m-1}},
\end{equation}
observe that
\begin{align*}
x^2\le \Big(\frac{2c_1}{|y|^{n_1}}\Big)^{\frac{1}{m-1}} \le C_3,\quad (x,y)\in\mathcal{R}_3.
\end{align*}
Letting $c_3$ satisfy \eqref{ub:c3}, we obtain
\begin{align} \label{ineq:x^2<C_3}
x^2\le C_3=\frac{m}{bk_3(m+n)}.
\end{align}
Picking $b>4$, we further deduce
\begin{align*}
1-k_3x^2\ge 1-\frac{m}{4(m+n)}\ge \frac{3}{4}.
\end{align*}
It follows that
\begin{align} \label{ineq:V_2>y^2:R_3}
V_3=y^2(1-k_3x^2) \ge \frac{3}{4}y^2.
\end{align}
Hence, $V_3$ tends to infinity whenever $|y|\to \infty$ in $\R_3$. This verifies the first condition of Definition \ref{def:lyapunov}.

Turning to the dissipative bound \eqref{ub:lv3}, we apply $\L$ as in \eqref{form:L} to $V_3$ and obtain
:\begin{equation}\label{eq:lv3}(\mathcal{L}V_3)(x,y)=2my^2\left(1-k_3x^2\right)\left(-w-w^q\right)+2nk_3x^2y^2\left(w^q-w\right)+\epsilon_y^2\left(1-k_3x^2\right)-\epsilon_x^2k_3y^2,
\end{equation}
where $w$ is defined in \eqref{eq:w}. Similarly to the proof of Lemma \ref{lem:v2}, letting $u=|w|$, we estimate the right-hand side of \eqref{eq:lv3} as follows.
\begin{align}
(\mathcal{L}V_3)(x,y)
      &\le 2my^2\left(1-k_3x^2\right)\left(u-u^q\right)+2nk_3x^2y^2\left(u^q+u\right)+\epsilon_y^2-\epsilon_x^2k_3y^2 \nt \\
      &= 2my^2u-2my^2u^q+2mk_3x^2y^2u+2mk_3x^2y^2u^q\nt \\
      &\quad+2nk_3x^2y^2u^q+2nk_3x^2y^2u+\epsilon_y^2-\epsilon_x^2k_3y^2 \nt \\
    &=2A_3y^2-\frac{m}{2}y^2u^q-\epsilon_x^2k_3y^2+\epsilon_y^2,\quad (x,y)\in \mathcal{R}_3,\label{ub:lv3.1}
\end{align}
where 
\begin{equation*}
    A_3:=\left(k_3(m+n)x^2-\frac{3}{4}m\right)u^q+\left(k_3(m+n)x^2+m\right)u.
\end{equation*}
From \eqref{ineq:x^2<C_3}, we note that
\begin{align}\label{eq:g3}
    A_3&\le \left(k_3(m+n)C_3-\frac{3}{4}m\right)u^2+\left(k_3(m+n)C_3+m\right)u \nt \\
    &=m\left(\frac{1}{b}-\frac{3}{4}\right)u^2+m\left(\frac{1}{b}+1\right)u=m\cdot g(u),
\end{align}
where $g(u)$ is as in \eqref{form:g(u)}. In view of \eqref{ineq:max_g(u)}, we obtain
\begin{align*}
A_3<m\cdot 2\cdot 4^{\frac{1}{q-1}}.
\end{align*}
Together with the bound \eqref{ub:lv3.1} and the choice of $k_3$ as in \eqref{eq:k3}, we infer for all $(x,y)\in\R_3$,
\begin{align*}
\L V_3\le 2A_3y^2 -\frac{m}{2}y^2u^2-\epsilon_x^2k_3y^2+\epsilon_y^2  
   &\le 2m\cdot 2\cdot 4^{\frac{1}{q-1}} y^2-\frac{m}{2}y^2u^2-\epsilon_x^2\Big(4^{\frac{q}{q-1}}+1\Big) \frac{m}{\epsilon_x^2}y^2+\epsilon_y^2\\
   &=-my^2-\frac{m}{2}y^2u^2+\epsilon_y^2.
\end{align*}
Recalling $m>1$ by virtue of Assumption \ref{cond:m_and_n}, this produces the bound \eqref{ub:lv3}, thereby verifying the second condition of Definition \ref{def:lyapunov}. The proof is thus finished.

\end{proof}

\subsection{Local Lyapunov functions in overlapping regions} \label{sec:lyapunov:gluing} Given the local Lyapunov functions $V_i$, $i=1,2,3$, in this section, we proceed to glue them in the over-lapping regions $\R_1\cap\R_2$ and $\R_1\cap\R_3$ to create a single globally Lyapunov function. For this purpose, we introduce the following smooth cut-off function
$\phi:\mathbb{R}\to \mathbb{R}$ given by
\begin{equation}\label{form:phi}
    \phi(t)=\begin{cases}
        1,&|t|\ge 4,\\
        \text{monotone},& 1<t<4,\\
        0,& |t|\le 1.
    \end{cases}
\end{equation}
Denoting
\begin{equation}\label{eq:lambda}
    \lambda(x,y):=\left(\frac{|x|^{m-1}|y|^{n-1}}{c_1}\right)^2.
\end{equation}
for $i=2,3$, we define $V_{1i}:\mathcal{R}_1\cap \mathcal{R}_i\to \mathbb{R}$ as:
\begin{equation}\label{eq:v1i}
    V_{1i}(x,y)=
    \phi\big(\lambda(x,y)\big)\cdot V_1(x,y)
    +
    \left[1-\phi\big(\lambda(x,y)\big)\right]\cdot V_i(x,y).
\end{equation}
For the sake of convenience, in what follows, we compute the partial derivative terms on the right-hand side of \eqref{eq:Lv13}. We will make use of these identities to establish the Lyapunov property of $V_{1i}$ in $\R_1\cap\R_i$, $i=2,3$. The first derivatives are given by
\begin{align}
\frac{\partial V_{1i}} {\partial x}
&=\phi'\cdot\lambda \cdot\frac{2(m-1)}{x}(V_1-V_i)+\phi\cdot\frac{\partial V_1}{\partial x}+(1-\phi)\cdot\frac{\partial V_i}{\partial x},\label{eq:dx}
\end{align}
and
\begin{align}
\frac{\partial V_{1i}}{\partial y} 
&=\phi'\cdot\lambda \cdot\frac{2(n-1)}{y}(V_1-V_i)+\phi\cdot\frac{\partial V_1}{\partial y}+(1-\phi)\cdot\frac{\partial V_i}{\partial y}.\label{eq:dy}
\end{align}
The expressions of the second derivatives are provided next
\begin{align}
\frac{\partial^2 V_{1i}}{\partial x^2}&=
\phi^{''}\cdot\lambda^2\cdot\frac{2^2(m-1)^2}{x^2}(V_1-V_i)+\phi'\cdot\lambda \cdot\frac{2^2(m-1)^2}{x^2}(V_1-V_i)\notag\\
&\qquad+\phi'\cdot \lambda \cdot \frac{2(1-m)}{x^2}(V_1-V_i)
    +
    2\phi'\cdot \lambda\cdot \frac{2(m-1)}{x}\frac{\partial (V_1-V_i)}{\partial x}\notag\\
    &\qquad+\phi\cdot\frac{\partial^2 V_1}{\partial x^2}
    +
    (1-\phi)\cdot\frac{\partial^2 V_i}{\partial x^2},
\label{eq:dx2}
\end{align}
and
\begin{align}
 \frac{\partial^2 V_{1i}}{\partial y^2}
& =\phi^{''}\cdot\lambda^2\cdot \frac{2^2(n-1)^2}{y^2}\cdot (V_1-V_i)
    +
    \phi'\cdot \lambda \cdot\frac{2^2(n-1)^2}{y^2} (V_1-V_i) 
    \notag\\
    &\qquad+\phi'\cdot \lambda \cdot \frac{2(1-n)}{y^2}(V_1-V_i) 
    +2\phi'\cdot \lambda\cdot \frac{2(n-1)}{y}\frac{\partial (V_1-V_i)}{\partial y}
    \notag\\
    &\qquad+\phi\cdot\frac{\partial^2 V_1}{\partial y^2}
    +
    (1-\phi)\cdot \frac{\partial^2 V_i}{\partial y^2}.
\label{eq:dy2}
\end{align}

We now proceed to verify that $V_{1i}$, $i=2,3$, is a Lyapunov function in the overlapping region $\R_1\cap \R_i$. Ultimately, the results below paired with Lemma \ref{lem:v1}, Lemma \ref{lem:v2} and Lemma \ref{lem:v3} create a single globally Lyapunov function for system \eqref{eq:system}.

\begin{lemma}\label{lem:v12}Let $V_{12}$ be defined in \eqref{eq:v1i} (with $i=2$), $c_1,c_2$ and $k_2$ be the constants in Lemma \ref{lem:v2}. Then, for all $c_1$ sufficiently large, $V_{12}$ is a local Lyapunov function in $\mathcal{R}_1\cap\mathcal{R}_2$.
\end{lemma}

\begin{lemma}\label{lem:v13}Let $V_{13}$ be defined in \eqref{eq:v1i} (with $i=3$), $c_1,c_3$ and $k_3$ be the constants in Lemma \ref{lem:v3}. Then, for all $c_1$ sufficiently large, $V_{13}$ is a local Lyapunov function in $\mathcal{R}_1\cap\mathcal{R}_3$.
\end{lemma}

To avoid repetition, we only present the proof of Lemma \ref{lem:v13}. Lemma \ref{lem:v12} can be established by employing an analogous argument.

\begin{proof}[Proof of Lemma \ref{lem:v13}] First of all, from the expressions \eqref{eq:v1}, \eqref{eq:v3} and the estimate \eqref{ineq:V_2>y^2:R_3}, we see that 
\begin{align*}
V_{13}=\phi V_1+(1-\phi)V_3 \ge \frac{1}{2}y^2.
\end{align*}
It follows that $V_{13}(x,y)\to\infty$ whenever $|(x,y)|\to\infty$ in $\R_1\cap\R_3$. This verifies the first condition of Definition \ref{def:lyapunov}.

Turning to the Lyapunov property, we observe that $\lambda(x,y)$ defined in \eqref{eq:lambda} satisfies 
\begin{align}\label{obs:bdd lambda}
|\lambda(x,y)|\le 4,\quad (x,y)\in \R_1\cap\R_3.
\end{align}
Also, there exists a constant $\rho>1$ depending only on $\phi$ as in \eqref{form:phi} such that
\begin{align} \label{obs:bdd phis}
\max_{t\in\rbb}\big\{|\phi(t)|,|\phi'(t)|,|\phi''(t)\big\}<\rho.
\end{align}
Recalling $\mathcal{L}$ and $w$ as in \eqref{form:L}-\eqref{eq:w}, we have
\begin{equation}\label{eq:Lv13}
     \mathcal{L}V_{13}=(w-w^q)nx\frac{\partial V_{13}}{\partial x} +\frac{\epsilon_x^2}{2} \frac{\partial^2V_{13}}{\partial x^2} + (-w-w^q)my\frac{\partial V_{13}}{\partial y} + \frac{\epsilon_y^2}{2} \frac{\partial^2V_{13}}{\partial y^2}.
\end{equation}

With regard to $ \frac{\partial^2 V_{13}}{\partial y^2}$, recalling that
\begin{align*}
V_1-V_3=x^2\left(1+k_3y^2\right),
\end{align*}
we may recast the first three terms of $\frac{\partial^2 V_{13}}{\partial y^2}$ on the right-hand side of \eqref{eq:dy2} as
\begin{align}\label{eq:first three terms in partial y-sqr}
&\phi^{''}\cdot\lambda^2\cdot \frac{2^2(n-1)^2}{y^2}\cdot (V_1-V_3)
    +\phi'\cdot \lambda \cdot\frac{2^2(n-1)^2}{y^2} (V_1-V_3) 
    +\phi'\cdot \lambda \cdot \frac{2(1-n)}{y^2}(V_1-V_3) \notag \\
    &=\phi^{''}\cdot\lambda^2\cdot2^2(n-1)^2\frac{x^2}{y^2}+k_3x^2
    +\phi'\cdot \lambda \cdot2^2(n-1)^2\frac{x^2}{y^2}+k_3x^2
    +\phi'\cdot \lambda \cdot 2(1-n)\frac{x^2}{y^2}+k_3x^2.
\end{align}
Also, since
\begin{align*}
\frac{\partial (V_1-V_3)}{\partial y}=2k_3x^2y,
\end{align*}
the fourth term of $\frac{\partial^2 V_{13}}{\partial y^2}$ on the right-hand side of \eqref{eq:dy2} is rewritten as
 \begin{equation}\label{eq:fourth term in partial y-sqr}
       2\phi'\cdot \lambda\cdot \frac{2(n-1)}{y}\frac{\partial (V_1-V_i)}{\partial y}=2\phi'\cdot \lambda \cdot 2^2(n-1)k_3x^2.
 \end{equation}
Letting $c_3$ and $k_3$ be specified according to Lemma \ref{lem:v3}, we note that
\begin{align*}
|y|\ge c_3,\quad x^2\le \frac{1}{k_3},\quad (x,y)\in \mathcal{R}_3.
\end{align*}
Together with observations \eqref{obs:bdd lambda}-\eqref{obs:bdd phis} as well as expressions \eqref{eq:first three terms in partial y-sqr}-\eqref{eq:fourth term in partial y-sqr}, we infer the existence of a positive constant $C=C(n,c_3,k_3)$ such that for $(x,y)\in\R_1\cap \R_3$,
\begin{align*}
\phi^{''}\cdot\lambda^2\cdot \frac{2^2(n-1)^2}{y^2}\cdot (V_1-V_3)
    +\phi'\cdot \lambda \cdot\frac{2^2(n-1)^2}{y^2} (V_1-V_3) 
    +\phi'\cdot \lambda \cdot \frac{2(1-n)}{y^2}(V_1-V_3)\le C,
\end{align*}
and that
\begin{align*}
2\phi'\cdot \lambda\cdot \frac{2(n-1)}{y}\frac{\partial (V_1-V_i)}{\partial y} \le C.
\end{align*}
In view of \eqref{eq:dy2}, we deduce
\begin{align} \label{eq:bound-v13-dy2}
\frac{\partial^2 V_{13}}{\partial y^2}\le  \phi\frac{\partial^2 V_1}{\partial y^2}+    (1-\phi)\frac{\partial^2 V_3}{\partial y^2}+C,\quad (x,y)\in \mathcal{R}_1\cap\mathcal{R}_3.
\end{align}

Concerning $\frac{\partial^2 V_{13}}{\partial x^2}$ on the right-hand side of \eqref{eq:Lv13}, we combine
\begin{align*}
V_1-V_3=x^2\left(1+k_3y^2\right),\quad\text{and}\quad\frac{\partial (V_1-V_3)}{\partial x}=2x\left(1+k_3y^2\right),
\end{align*}
with \eqref{eq:dx2} to obtain the identity
\begin{align}\label{ub:partial x-sqr 1}
\frac{\partial^2 V_{13}}{\partial x^2}
           &=\phi^{''}\cdot\lambda^2\cdot2^2(m-1)^2\left(1+k_3y^2\right)
             +
             \phi'\cdot\lambda \cdot 2^2(m-1)^2\left(1+k_3y^2\right) \nt \\
            &\qquad+
            \phi'\cdot \lambda \cdot  2(1-m)\left(1+k_3y^2\right)
             +
             2\phi'\cdot \lambda\cdot 2(m-1)\left(1+k_3y^2\right) \nt \\
            &\qquad+
             \phi\cdot\frac{\partial^2 V_1}{\partial x^2}
             +
            (1-\phi)\cdot\frac{\partial^2 V_3}{\partial x^2}.
\end{align}
Applying \eqref{obs:bdd lambda}-\eqref{obs:bdd phis}, namely, $|\lambda|\le 4$ and $|\phi'|, |\phi''|\le \rho$, we deduce the bound 
        \begin{gather}
            \frac{\partial^2 V_{13}}{\partial x^2}\le C+ 160\rho m^2k_3y^2+\phi\frac{\partial^2 V_1}{\partial x^2}+(1-\phi)\frac{\partial^2 V_3}{\partial x^2},\quad(x,y)\in \mathcal{R}_1\cap\mathcal{R}_3,\label{eq:bound-v13-dx2}
\end{gather}
for some positive constant $C=C(m,\phi)$.

Turning to $\mathcal{L}V_{13}$ as in \eqref{eq:Lv13}, from \eqref{eq:bound-v13-dy2} and \eqref{eq:bound-v13-dx2} together with the expressions \eqref{eq:dx}-\eqref{eq:dy}, we infer
  \begin{align*}
       \mathcal{L}V_{13}&\le \phi\cdot \mathcal{L}V_1 +(1-\phi)\cdot \mathcal{L}V_3+n\phi'\cdot 2(m-1)(w-w^q)\cdot \lambda\cdot (V_1-V_3)\notag\\
       &\qquad+m\phi'\cdot 2(n-1)(-w-w^q)\cdot \lambda\cdot (V_1-V_3) \\
  &\qquad     +C+\frac{\epsilon_x^2}{2}\cdot 160\rho m^2k_3\cdot y^2.       
       \end{align*}
Substituting $V_1-V_3=x^2\left(1+k_3y^2\right)$ into the above right-hand side yields
       \begin{align*}   
     \mathcal{L}V_{13}  &\le \phi\cdot \mathcal{L}V_1 +(1-\phi)\cdot \mathcal{L}V_3+80\epsilon_x^2 \rho m ^2k_3\cdot y^2+C\notag\\
       &\qquad+2w^q\phi'\lambda(m+n-2mn)\cdot x^2\left(1+k_3y^2\right)\notag\\
       &\qquad+2w\phi'\lambda (m-n)\cdot x^2\left(1+k_3y^2\right),\quad(x,y)\in \mathcal{R}_1\cap\mathcal{R}_3.
       \end{align*} 
Recalling the notation $u=|w|$ as in \eqref{eq:u}, from \eqref{obs:bdd lambda}-\eqref{obs:bdd phis} together with the fact that $m,n>1$, cf. Assumption \ref{cond:m_and_n}, we obtain the bound
       \begin{align}\label{ub:lv13.2}
          \mathcal{L}V_{13}&\le \phi\cdot \mathcal{L}V_1 +(1-\phi)\cdot \mathcal{L}V_3+80\epsilon_x^2 \rho m ^2k_3\cdot y^2+C\notag\\
       &\qquad+16\rho mn(u^q+u)\cdot x^2\left(1+k_3y^2\right),\quad  (x,y)\in \mathcal{R}_1\cap \mathcal{R}_3.
       \end{align}
Letting $b$ be the constant as in \eqref{ub:c3} satisfying
       \begin{equation}\label{lem4.2: upper bound of b}
            b>64\rho mn>4,
       \end{equation}       
from the estimate \eqref{ineq:x^2<C_3}, we see that for $x\in\R_3$,
       \begin{align}\label{ub:ub x-sqr for v13}
         x^2\le \frac{m}{b(m+n)k_3}<\frac{1}{bk_3}<\frac{1}{64\rho mn k_3}.
       \end{align} 
Applying \eqref{ub:ub x-sqr for v13} to \eqref{ub:lv13.2} produces 
       \begin{align}\label{ub:lv13.4}
          \mathcal{L}V_{13} &\le \phi\cdot \mathcal{L}V_1 +(1-\phi)\cdot \mathcal{L}V_3+80\epsilon_x^2 \rho m ^2k_3\cdot y ^2+C\notag\\
           &\qquad+16\rho mn (u^q +u)\cdot\frac{1}{64\rho mn k_3}\left(1+k_3y^2\right)\notag\\
           &=\phi\cdot \mathcal{L}V_1 +(1-\phi)\cdot \mathcal{L}V_3+\left(80\epsilon_x^2 \rho m ^2k_3+ \frac{1}{4}u^q+\frac{1}{4}u\right)\cdot y ^2\notag\\
           &\qquad+C+\frac{1}{4k_3}\left(u^q+u\right),\quad (x,y)\in \mathcal{R}_1\cap \mathcal{R}_3.
       \end{align}

Next, to estimate the right-hand side of \eqref{ub:lv13.4}, we invoke Lemma \ref{lem:v1} and Lemma \ref{lem:v3} and obtain
\begin{align*}
&\phi\cdot \mathcal{L}V_1 +(1-\phi)\cdot \mathcal{L}V_3\\
&\le \phi\cdot\left(-\frac{1}{2}V_1-u^qV_1+\epsilon_x^2+\epsilon_y^2\right)+(1-\phi)\cdot \left(-V_3-\frac{1}{2}y^2u^q+\epsilon_y^2\right)\\
&\le -\frac{1}{2}V_{13}-\frac{1}{2}u^q y^2+C.
\end{align*}
Setting 
\begin{align*}
b_{13}= 80\epsilon_x^2 \rho m^2k_3>1,
\end{align*}
it follows from \eqref{ub:lv13.4} that
\begin{align} \label{ub:lv13.5}
         \mathcal{L}V_{13} & \le
          -\frac{1}{2} V_{13}+b_{13}y^2-\frac{1}{4}u^qy^2+\frac{1}{4}uy^2+\frac{1}{4k_3}u^q+\frac{1}{4k_3}u+C,
    \end{align}
    where $C$ is a positive constant independent of $(x,y)$. Recall from Lemma \ref{lem:v1} that $c_1>\frac{4^{\frac{1}{q-1}}}{a}$, and thus \eqref{ub:u>4^(1/(q-1))_in_R1} and \eqref{eq:bound on u} hold in $\R_1$. In particular, 
 \begin{align*}
 u<\frac{1}{2}u^{q}.
\end{align*}  
As a consequence, \eqref{ub:lv13.5} implies the bound
\begin{align}\label{ub:lv13.6}
\mathcal{L}V_{13}  &\le -\frac{1}{2}V_{13}+b_{13}y^2-\frac{1}{4}u^2y^2+\frac{1}{8}u^qy^2+\frac{1}{4k_3}u^q+\frac{1}{8k_3}u^q+C \notag\\
            &=-\frac{1}{2}V_{13}+\left(b_{13}-\frac{1}{16}u^q\right)y^2+\left(\frac{3}{8k_3}-\frac{1}{16}y^2\right)u^q+C.
\end{align}
We emphasize that at this point, other than the condition $c_1>\frac{4^{\frac{1}{q-1}}}{a}$ as in Lemma \ref{lem:v1}, we have not chosen $c_1$ carefully. In what follows, we will pick $c_1$ sufficiently large so as to produce 
\begin{align*}
b_{13}-\frac{1}{16}u^q<0,\quad \text{and}\quad \frac{3}{8k_3}-\frac{1}{16}y^2<0.
\end{align*}
On the one hand, from \eqref{ub:u>4^(1/(q-1))_in_R1}, we see that
\begin{align*}
\frac{1}{16}u^2>\frac{1}{16}a^2c_1^2.
\end{align*}
Thus, provided
 \begin{equation*} 
        c_1>\frac{4}{a}\sqrt{b_{13}},
    \end{equation*}
we immediately obtain
\begin{align*}
b_{13}-\frac{1}{16}u^2<0.
\end{align*}
On the other hand, recall from Lemma \ref{lem:v3} that in $\mathcal{R}_3$,
\begin{align*}
|y|\ge c_3=\left(2c_1\cdot\left(\frac{n}{bk_3(m+n)}\right)^\frac{1-n}{2}\right)^\frac{1}{m-1}.
\end{align*}
In the above, $k_3$ and $b$ are as in \eqref{eq:k3} and \eqref{lem4.2: upper bound of b}, respectively. Pick $c_1$ sufficiently large such that
     \begin{equation*} 
        c_1>\frac{1}{2}\left(\frac{6}{k_3}\right)^\frac{m-1}{2}\left(\frac{n}{bk_3(m+n)}\right)^{\frac{n-1}{2}}.
    \end{equation*}
A routine calculation shows that
    \begin{align*}
    \frac{3}{8k_3}-\frac{1}{16}y^2<0.
    \end{align*}
Altogether, choosing $c_1$ sufficiently large such that
\begin{equation*}  
        c_1>\max\left\{\frac{1}{2}\left(\frac{6}{k_3}\right)^\frac{m-1}{2}\left(\frac{n}{bk_3(m+n)}\right)^{\frac{n-1}{2}},\frac{4}{a}\sqrt{ 80\epsilon_x^2 \rho m^2k_3},\frac{4^{\frac{1}{q-1}}}{a}\right\}, 
    \end{equation*}
from \eqref{ub:lv13.6}, we arrive at the Lyapunov bound
\begin{align*}
\mathcal{L}V_{13}  &\le -\frac{1}{2}V_{13}+C,\quad (x,y)\in\R_1\cap\R_3,
\end{align*}
for some positive constant $C$ independent of $(x,y)$. This finishes the proof.

\end{proof}

\section{Proof of Theorem \ref{thm:ergodicity}} \label{sec:proof-of-main-theorem}

In this section, we provide the proof of Theorem \ref{thm:ergodicity}, whose argument makes use of the Lyapunov construction in Section \ref{sec:lyapunov} as well as a minorization condition. For the reader's convenience, we recall the definition of the latter below.

\begin{definition} \label{def:minorization}
 Denote 
\begin{align*}
B(\x,R) =\big\{\y\in\rbb^2:|\x-\y|\le R\big\}.
\end{align*}
The system \eqref{eq:system} is said to satisfy a \emph{minorization} condition if for all $R$ sufficiently large, there exist positive constants $t_R,\,\gamma_R$ and a probability measure $\nu_R$ on $\rbb^2$ such that for every $\x\in B((0,0),R)$ and any Borel set $A\subset \rbb^2$,
\begin{align} \label{ineq:minorization}
P_{t_R}\big(\x,A\big)\ge \gamma_R\nu_R(A).
\end{align}
\end{definition}

The minorization condition as in Definition \ref{def:minorization} is summarized in the following auxiliary result whose proof is relatively standard following the classical control theory of SDEs \cite{mattingly2002ergodicity,stroock1972degenerate,
stroock1972support}.

\begin{lemma} \label{lem:minorization}
The system \eqref{eq:system} satisfies the minorization condition as in Definition \ref{def:minorization}.
\end{lemma}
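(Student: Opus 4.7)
The plan is to invoke the classical Stroock--Varadhan support theorem in tandem with the uniform ellipticity of the generator. The diffusion matrix of \eqref{eq:system} is the constant matrix $\sigma = \mathrm{diag}(\epsilon_x,\epsilon_y)$, which is of full rank since $\epsilon_x,\epsilon_y>0$. Combined with the non-explosion given by Proposition~\ref{prop:well-posed} and the smoothness of the drift on the whole of $\rbb^2$ (guaranteed by $m,n>1$, cf.\ Remark \ref{rem:qmn}), the SDE falls into the standard framework for producing a smooth positive transition density.

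The first step is to establish, for each fixed $t>0$, that the law $P_t(\x_0,\cdot\,)$ admits a $C^\infty$ density $p_t(\x_0,\y)$ with respect to Lebesgue measure on $\rbb^2$. To handle the polynomial growth of the drift, I would localize via stopping times $\tau_n=\inf\{s\ge 0:|\x_s|>n\}$. On each bounded region, the coefficients are smooth and bounded, so classical parabolic regularity (or, equivalently, Malliavin calculus applied to the uniformly elliptic SDE) yields a smooth density for the stopped process; non-explosion from the Lyapunov estimates of Section~\ref{sec:lyapunov} then allows passage to the unrestricted process, in the spirit of \cite{mattingly2002ergodicity,stroock1972degenerate,stroock1972support}.

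The second step is to verify strict positivity, $p_t(\x_0,\y)>0$ for every $\x_0,\y\in\rbb^2$ and $t>0$. This follows from the Stroock--Varadhan support theorem once one exhibits full controllability of the associated skeleton system
\begin{equation*}
\dot{x}(s)=b_1\bigl(x(s),y(s)\bigr)+\epsilon_x u_1(s),\qquad \dot{y}(s)=b_2\bigl(x(s),y(s)\bigr)+\epsilon_y u_2(s).
\end{equation*}
Given arbitrary endpoints $\x_0,\x_1\in\rbb^2$ and time $t>0$, one picks any smooth path $\gamma\in C^\infty([0,t];\rbb^2)$ from $\x_0$ to $\x_1$ and solves algebraically for $(u_1,u_2)$ from the above equations; this produces admissible controls, so the support of $P_t(\x_0,\cdot\,)$ is all of $\rbb^2$. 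Jointly with the smoothness from the first step, this forces $p_t(\x_0,\y)>0$ everywhere.

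The third step is a continuity-and-compactness extraction. Fix any convenient time, say $t_R=1$. Joint continuity of $(\x_0,\y)\mapsto p_{t_R}(\x_0,\y)$ together with compactness of $\overline{B(0,R)}\times\overline{B(0,R)}$ yields
\begin{equation*}
\gamma^\ast_R:=\inf_{\x_0,\y\in \overline{B(0,R)}}p_{t_R}(\x_0,\y)>0.
\end{equation*}
Setting $\nu_R(A):=|A\cap B(0,R)|/|B(0,R)|$ and $\gamma_R:=\gamma^\ast_R\,|B(0,R)|$, for any $\x\in B(0,R)$ and any Borel set $A\subset\rbb^2$,
\begin{equation*}
P_{t_R}(\x,A)\ge \int_{A\cap B(0,R)}p_{t_R}(\x,\y)\,\d\y\ge \gamma_R\,\nu_R(A),
\end{equation*}
which is exactly \eqref{ineq:minorization}. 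The only genuinely delicate point is the first step, namely justifying the smooth density despite the super-linear drift; all subsequent steps are standard consequences of ellipticity and compactness.
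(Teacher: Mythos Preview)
Your proposal is correct and uses the same high-level ingredients as the paper (uniform ellipticity for a smooth density, Stroock--Varadhan for controllability), but the mechanics diverge at the end. The paper does \emph{not} assert that $p_t(\x_0,\y)>0$ for all $\y$; instead it locates a single point $\y^*$ with $p_t(0,\y^*)>0$ (trivial, since the density integrates to one), extracts a neighborhood by continuity in $\y$, then uses the strong Feller property to get uniformity over $\x\in B(0,R)$, and finally runs a two-step Chapman--Kolmogorov argument to minorize $P_{2t}$ by a measure supported on $B(\y^*,\varepsilon_2)$. Your one-step route via a compactness infimum on $\overline{B(0,R)}\times\overline{B(0,R)}$ is more direct but leans on two stronger facts: joint continuity of $(\x_0,\y)\mapsto p_t(\x_0,\y)$ and strict positivity of the density everywhere. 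One caution on the latter: the deduction ``full support plus smoothness forces $p_t>0$ everywhere'' is not valid as written, since a smooth nonnegative density can vanish on a Lebesgue-null set while still assigning positive mass to every open set; you should instead appeal to a parabolic Harnack inequality, Aronson-type lower bounds, or a Chapman--Kolmogorov bootstrap. With that repair your argument is complete and arguably cleaner than the paper's two-step construction; the paper's version, on the other hand, is more self-contained in that it only needs positivity at one point.
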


For the sake of clarity, the proof of Lemma \ref{lem:minorization} will be deferred to the end of this section. Nevertheless, assuming Lemma \ref{lem:minorization}, we are now in a position to conclude the proof of Theorem \ref{thm:ergodicity} by verifying the conditions of \cite[Theorem 1.2]{hairer2011yet}, which we recall below for the sake of completeness.

\begin{theorem}{\cite[Theorem 1.2]{hairer2011yet}}
    Given a measurable space $\mathbf{X}$ and a Markov transition kernel $P$ on $\Xbf$, suppose that the followings hold:
    
    1. \textup{(\cite[Assumption 1]{hairer2011yet}) }There exists a function $V:\Xbf\to[0,\infty)$ and constants $c\in (0,1)$ and $K\ge 0$ such that for all $\x\in \Xbf$
    \begin{align*}
        P V (\x) \le cV(\x)+K,
    \end{align*}
    where $PV(\x) = \int_{\Xbf}V(\y)P(\x,\d\y)$.

    2. \textup{(\cite[Assumption 2]{hairer2011yet})} There exist a positive constant $\alpha\in(0,1)$ and a probability measure $\nu$ such that 
    \begin{align*}
        \inf_{\x\in \mathcal{C}} P(\x,\cdot) \le \alpha\nu(\cdot),
    \end{align*}
    where $\mathcal{C}=\{\y\in \Xbf:V(\y)\le R\}$ for some $R\ge 2K/(1-c)$ where $c$ and $K$ are the constants from condition 1.

    Then, $P$ admits a unique invariant probability measure $\nu$. Moreover, there exist positive constant $\gamma\in(0,1)$ and $C>0$ such that for all probability measure $\mu$ on $\Xbf$,
    \begin{align*}
        \W_V(P_n^*\mu,\nu) \le C\gamma^n\W_V(\mu,\nu).
    \end{align*}

\end{theorem}

\begin{proof}[Proof of Theorem \ref{thm:ergodicity}] On the one hand, from Lemma \ref{lem:Lyapunov}, $V$ constructed in Section \ref{sec:lyapunov} is a global Lyapunov function for \eqref{eq:system}. In particular, this satisfies \cite[Assumption 1]{hairer2011yet}. On the other hand, the minorization condition established in Lemma \ref{lem:minorization} verifies \cite[Assumption 2]{hairer2011yet}. In view of \cite[Theorem 1.2]{hairer2011yet}, we obtain a unique invariant probability measure $\pi$ for \eqref{eq:system} as well as the exponential convergent rate \eqref{ineq:exponential-rate}, as claimed.

\end{proof}

Turning back to Lemma \ref{lem:minorization}, in order to establish the minorization condition, we will make use of the Stroock-Varadhan Support Theorem \cite{stroock1972degenerate,stroock1972support} as well as a control argument \cite{mattingly2002ergodicity} showing that the dynamics can always be driven toward the center of the phase space. Together with the ellipticity \cite{hormander1967hypoelliptic}, they will allow us to obtain the desired property. Ultimately, this result is combined with the Lyapunov function to conclude the exponential convergent rate in Theorem \ref{thm:ergodicity}.

\begin{proof}[Proof of Lemma \ref{lem:minorization}] 
Denote 
\begin{align*}
X_1=\epsilon_x\partial
_x,\quad \text{and}\quad X_2=\epsilon_y\partial_y.
\end{align*}
Observe that for every $\x\in\rbb^2$, $\{X_1(\x),X_2(\x)\}$ spans $\rbb^2$. In light of \cite[Corollary 7.2]{bellet2006ergodic}, the Markov transition probabilities $P_t(\x,\cdot)$ admits a smooth probability density $p(t,\x,\y):(0,\infty)\times\rbb^2\times\rbb^2\to [0,\infty)$. Furthermore, the Markov semigroup $\P_t(\x,\cdot)$ is strong Feller, i.e., for all $\f \in \B_b(\rbb^2)$, $P_t\f(\x)\in C^1(\rbb^2)$. In particular, for all $R>0$ and Borel set $A$, $P_t(\x,A)=P_t(1_A)(\x)$ is a continuous function with respect to $\x$. 

\newcommand{\xt}{\tilde{x}}
\newcommand{\yt}{\tilde{y}}

Next, consider the control problem
\begin{align}\label{eq:system:control}
        \d{}\xt_t& = \Big(h'(\xt^m_t \yt^n_t)\xt^{m-1}_t \yt^{n-1}_t -\big|h'(\xt^m_t \yt^n_t )\xt^{m-1}_t \yt^{n-1}_t \big|^q\Big)n \xt_t  \d t + \epsilon_x \d{}U_t^1, \nt \\ 
 \d{}\yt_t& = \Big(- h'(\xt^m_t \yt^n_t )\xt^{m-1}_t \yt^{n-1}_t  -\big|h'(\xt^m_t \yt^n_t )\xt^{m-1}_t \yt^{n-1}_t \big|^q\Big) m\yt_t \d t  + \epsilon_y \d{}U_t^2,
\end{align} 
where $(U^1_t,U^2_t)\in C^1(\rbb;\rbb^2)$ is a control process with $(U^1_0,U^2_0)=(0,0)$. Picking the trivial processes $\xt_t=\yt_t=U^1_t=U^2_t=0$, $t\ge 0$, observe that $(\xt_t,\yt_t,U^1_t,U^2_t)$ solves the control problem \eqref{eq:system:control} and drives the origin at time $0$ to the origin at time $t$. In light of the Stroock-Varadhan Support Theorem \cite[Theorem 6.1]{bellet2006ergodic}, \cite{
stroock1972degenerate,
stroock1972support}, we infer a positive constant $R_1>0$ such that $P_t((0,0),B((0,0),R_1))>0$. As a consequence, there exists $\y^*\in B((0,0),R_1)$ satisfying $p(t,(0,0),\y^*)>0$. Together with the smoothness of $p(t,\cdot,\cdot)$, we obtain the following infimum
\begin{align} \label{ineq:control:inf_(x,y)p(t,x,y)>0}
\inf_{\x\in B((0,0),\varepsilon_1),\y\in B(\y^*,\varepsilon_2)}p(t,\x,\y)>0,
\end{align}
for some positive constants $\varepsilon_1,\varepsilon_2$. Also, for any $(x,y)\in B((0,0),R)$, let $(\xt_t,\yt_t)\in C^1(\rbb;\rbb^2)$ be such that $(\xt_0,\yt_0)=(x,y)$ and $(\xt_t,\yt_t)=0$. Consider the control process
\begin{align*}
U^1_t&=\frac{1}{\epsilon_x}\int_0^t \xt_s'- \Big(h'(\xt^m_s \yt^n_s)\xt^{m-1}_s \yt^{n-1}_s -\big|h'(\xt^m_s \yt^n_s )\xt^{m-1}_s \yt^{n-1}_s \big|^q\Big)n \xt_s\d s,\\
U^2_t&=\frac{1}{\epsilon_y}\int_0^t \yt_s'+\Big( h'(\xt^m_s \yt^n_s )\xt^{m-1}_s \yt^{n-1}_s  +\big|h'(\xt^m_s \yt^n_s )\xt^{m-1}_s \yt^{n-1}_s \big|^q\Big) m\yt_s \d s.
\end{align*}
observe that $(\xt_t,\yt_t,U^1_t,U^2_t)$ defined above solves the control problem \eqref{eq:system:control} and drives $(x,y)$ at time $0$ to the origin at time $t$. We invoke the Stroock-Varadhan Theorem again to obtain 
\begin{align*}
P_t((x,y),B((0,0),\varepsilon_1))>0.
\end{align*}
By the strong Feller property, we deduce
\begin{align} \label{ineq:control:strong_Feller}
\inf_{\x\in B((0,0),R)}P_t(\x,B((0,0),\varepsilon_1))>0.
\end{align}

Now, define the following probability measure $\nu$ on $\rbb^2$ by
\begin{align*}
\nu(A)=\frac{|A\cap B(\y^*,\varepsilon_2)|}{|B(\y^*,\varepsilon_2)|},
\end{align*}
where for a slight abuse of notation, $|\cdot|$ denotes Lebesgue measure on $\rbb^2$. For every $\x\in B((0,0),R)$ and Borel set $A$, we have the following chain of estimates while making use of Markov property and \eqref{ineq:control:inf_(x,y)p(t,x,y)>0}-\eqref{ineq:control:strong_Feller}
\begin{align*}
    P_{2t}(\mathrm{x},A) & =\int_A\Big[ \int_{\rbb^2}P_t(\mathrm{x},\d\mathrm{y}) \Big]P_t(\mathrm{y},\d\mathrm{z}) =\int_{\rbb^2} \Big[ \int_AP_t(\mathrm{y},\d\mathrm{z}) \Big]P_t(\mathrm{x},\d\mathrm{y})\\
    &=\int_{\rbb^2} \Big[ \int_A p(t,\mathrm{y},\mathrm{z})\d\mathrm{z} \Big]P_t(\mathrm{x},\d\mathrm{y})\\
    &\ge \int_{B((0,0),\varepsilon_1)}\Big[ \int_{A\cap B(\mathrm{y}^*,\varepsilon_2)}p(t,\mathrm{y},\mathrm{z})\d\mathrm{z} \Big]P_t(\mathrm{x},\d\mathrm{y})\\
    &\ge \inf_{ \mathrm{y}\in B((0,0),\varepsilon_1)
    \atop \mathrm{z} \in  B(\mathrm{y}^*,\varepsilon_2)} p(t,\mathrm{y},\mathrm{z})\Big( \int_{B((0,0),\varepsilon_1)}P_t(\mathrm{x},\d\mathrm{y})\Big)|A\cap B(\y^*,\varepsilon_2)|\\
    &\ge \inf_{ \mathrm{y}\in B((0,0),\varepsilon_1)
    \atop \mathrm{z} \in  B(\mathrm{y}^*,\varepsilon_2)} p(t,\mathrm{y},\mathrm{z})\Big( \inf_{\x\in B((0,0),R)} P_t(\mathrm{x},B((0,0),\varepsilon_1))\Big)|A\cap B(\y^*,\varepsilon_2)|\\
    &\ge \gamma\nu(A),
\end{align*}
where
\begin{align*}
\gamma=\inf_{\x\in B((0,0),R)}P_t(\x,B((0,0),\varepsilon_1)) \cdot |B(\y^*,\varepsilon_2)|\cdot \inf_{\y\in B((0,0),\varepsilon_1),\z\in B(\y^*,\varepsilon_2)}p(t,\y,\z).
\end{align*}
This establishes the minorization property, thereby finishing the proof.
\end{proof}

\section*{Acknowledgments}

The authors would like to thank anonymous referees for their providing a thorough review of this work. We appreciate their careful reading and insightful comments, which have improved the manuscript.

\section*{conflict of interest}
The authors have no conflicts of interest to declare that are relevant to the content of this article.


\bibliographystyle{abbrv}
{\footnotesize\bibliography{Revision_V2}}

\begin{thebibliography}{10}

\bibitem{athreya2012propagating}
A.~Athreya, T.~Kolba, and J.~C. Mattingly.
\newblock {Propagating Lyapunov functions to prove noise--induced
  stabilization}.
\newblock {\em Electron. J. Probab}, 17(96):1--38, 2012.

\bibitem{bellet2006ergodic}
L.~R. Bellet.
\newblock {Ergodic properties of Markov processes}.
\newblock In {\em Open Quantum Systems II: The Markovian Approach}, pages
  1--39. Springer, 2006.

\bibitem{birrell2012transition}
J.~Birrell, D.~P. Herzog, and J.~Wehr.
\newblock The transition from ergodic to explosive behavior in a family of
  stochastic differential equations.
\newblock {\em Stoch. Process. Their Appl.}, 122(4):1519--1539, 2012.

\bibitem{bodova2012noise}
K.~Bodov{\'a} and C.~Doering.
\newblock Noise-induced statistically stable oscillations in a
  deterministically divergent nonlinear dynamical system.
\newblock {\em Commun. Math. Sci.}, 10(1):137--157, 2012.

\bibitem{cerrai2005stabilization}
S.~Cerrai.
\newblock Stabilization by noise for a class of stochastic reaction-diffusion
  equations.
\newblock {\em Probab. Theory Relat. Fields}, 133(2):190--214, 2005.

\bibitem{cooke2017geometric}
B.~Cooke, D.~P. Herzog, J.~C. Mattingly, S.~A. McKinley, and S.~C. Schmidler.
\newblock {Geometric ergodicity of two-dimensional Hamiltonian systems with a
  Lennard--Jones-like repulsive potential}.
\newblock {\em Commun. Math. Sci.}, 15(7):1987--2025, 2017.

\bibitem{foldes2021sensitivity}
J.~F{\"o}ldes, N.~E. Glatt-Holtz, and D.~P. Herzog.
\newblock {Sensitivity of steady states in a degenerately damped stochastic
  Lorenz system}.
\newblock {\em Stoch. Dyn.}, 21(08):2150055, 2021.

\bibitem{gawedzki2011ergodic}
K.~Gawedzki, D.~P. Herzog, and J.~Wehr.
\newblock Ergodic properties of a model for turbulent dispersion of inertial
  particles.
\newblock {\em Commun. Math. Phys.}, 308:49--80, 2011.

\bibitem{hairer2011yet}
M.~Hairer and J.~C. Mattingly.
\newblock {Yet another look at Harris’ ergodic theorem for Markov chains}.
\newblock {\em Seminar on Stochastic Analysis, Random Fields and Applications
  VI}, 63:109--117, 2011.

\bibitem{herzog2015noiseI}
D.~P. Herzog and J.~C. Mattingly.
\newblock {Noise-induced stabilization of planar flows I}.
\newblock {\em Electron. J. Probab.}, 20(111):1--43, 2015.

\bibitem{herzog2015noiseII}
D.~P. Herzog and J.~C. Mattingly.
\newblock {Noise-induced stabilization of planar flows II}.
\newblock {\em Electron. J. Probab.}, 20(113):1--37, 2015.

\bibitem{herzog2017ergodicity}
D.~P. Herzog and J.~C. Mattingly.
\newblock {Ergodicity and Lyapunov functions for Langevin dynamics with
  singular potentials}.
\newblock {\em Commun. Pure Appl. Math.}, 72(10):2231--2255, 2019.

\bibitem{herzog2021stability}
D.~P. Herzog and H.~D. Nguyen.
\newblock Stability and invariant measure asymptotics in a model for heavy
  particles in rough turbulent flows.
\newblock {\em arXiv preprint arXiv:2104.08629}, 2021.

\bibitem{hormander1967hypoelliptic}
L.~H{\"o}rmander.
\newblock Hypoelliptic second order differential equations.
\newblock {\em Acta Math.}, 119(1):147--171, 1967.

\bibitem{karatzas2012brownian}
I.~Karatzas and S.~Shreve.
\newblock {\em {Brownian Motion and Stochastic Calculus}}, volume 113.
\newblock Springer Science \& Business Media, 2012.

\bibitem{khasminskii2011stochastic}
R.~Khasminskii.
\newblock {\em Stochastic stability of differential equations}, volume~66.
\newblock Springer Science \& Business Media, 2011.

\bibitem{kolba2019noise}
T.~Kolba, A.~Coniglio, S.~Sparks, and D.~Weithers.
\newblock Noise-induced stabilization of perturbed hamiltonian systems.
\newblock {\em Am. Math. Mon.}, 126(6):505--518, 2019.

\bibitem{leimbach2020noise}
M.~Leimbach, J.~C. Mattingly, and M.~Scheutzow.
\newblock Noise-induced strong stabilization.
\newblock {\em arXiv preprint arXiv:2009.10573}, 2020.

\bibitem{leimbach2017blow}
M.~Leimbach and M.~Scheutzow.
\newblock Blow-up of a stable stochastic differential equation.
\newblock {\em J. Dyn. Differ. Equ.}, 29:345--353, 2017.

\bibitem{lu2019geometric}
Y.~Lu and J.~C. Mattingly.
\newblock {Geometric ergodicity of Langevin dynamics with Coulomb
  interactions}.
\newblock {\em Nonlinearity}, 33(2):675, 2019.

\bibitem{mattingly2002ergodicity}
J.~C. Mattingly, A.~M. Stuart, and D.~J. Higham.
\newblock {Ergodicity for SDEs and approximations: locally Lipschitz vector
  fields and degenerate noise}.
\newblock {\em Stoch. Process. Their Appl.}, 101(2):185--232, 2002.

\bibitem{meyn2012markov}
S.~P. Meyn and R.~L. Tweedie.
\newblock {\em Markov Chains and Stochastic Stability}.
\newblock Springer Science \& Business Media, 2012.

\bibitem{scheutzow1993stabilization}
M.~Scheutzow.
\newblock Stabilization and destabilization by noise in the plane.
\newblock {\em Stoch. Anal. Appl.}, 11(1):97--113, 1993.

\bibitem{scheutzow1995integral}
M.~Scheutzow.
\newblock An integral inequality and its application to a problem of
  stabilization by noise.
\newblock {\em J. Math. Anal. Appl.}, 193(1):200--208, 1995.

\bibitem{stroock1972degenerate}
D.~Stroock and S.~Varadhan.
\newblock On degenerate elliptic-parabolic operators of second order and their
  associated diffusions.
\newblock {\em Commun. Pure Appl. Math.}, 25(6):651--713, 1972.

\bibitem{stroock1972support}
D.~W. Stroock and S.~R. Varadhan.
\newblock On the support of diffusion processes with applications to the strong
  maximum principle.
\newblock In {\em Proc. 6-th Berkeley Symp. Math. Stat. Probab. (Univ.
  California, Berkeley, Calif., 1970/1971)}, volume~3, pages 333--359, 1972.

\bibitem{villani2021topics}
C.~Villani.
\newblock {\em Topics in optimal transportation}, volume~58.
\newblock American Mathematical Soc., 2021.

\end{thebibliography}
%

\end{document}